\DeclareSymbolFont{AMSb}{U}{msb}{m}{n}
\def\B{\mathcal {B}}
\def\P{\mathbb P}
\def\N{\mathbb N}
\def\K{\mathcal {K}}
\def\M{\mathcal {M}}
\def\V{\mathcal {V}}
\def\R{\mathbb {R}}
\def\Z{\mathbb {Z}}
\def\dd{\mathrm {d}}
\def\FF{\mathscr{F}}
\def\EE{\mathcal {E}}
\def\E{\mathbb {E}}
\def\T{\mathbb {T}}
\def\1{\,{\makebox[0pt][c]{\normalfont    1}
\makebox[2.5pt][c]{\raisebox{3.5pt}{\tiny {$\|$}}}
\makebox[-2.5pt][c]{\raisebox{1.7pt}{\tiny {$\|$}}}
\makebox[2.5pt][c]{} }}
\def\eps{\varepsilon}
\def\scirc{\mathbin{\raise.15ex\hbox{\scriptsize$\circ$}}}
\newtheorem{thm}{Theorem}
\newtheorem{prop}[thm]{Proposition}
\newtheorem{lemma}[thm]{Lemma}
\theoremstyle{definition}
\newtheorem{defn}[thm]{Definition}
\newtheorem{bem}[thm]{Remark}
\newtheorem{bemn}[thm]{Remarks}
\newtheorem{bsp}[thm]{Example}
\renewenvironment{proof}{\noindent\textbf{Proof.}\ }{\qed}
\title{Minimal random attractors}
\author{Hans Crauel%
  \thanks{Institut f\"ur Mathematik, 60629 Frankfurt, FRG; \ 
    \small\texttt{crauel{\scriptsize @}math.uni-frankfurt.de}}
\and Michael Scheutzow%
  \thanks{Institut f\"ur Mathematik, MA 7-5, Fakult\"at II, 
    Technische Universit\"at Berlin, 
    Stra\ss e des 17.~Juni 136, 10623 Berlin, FRG;  \ 
    \small\texttt{ms{\scriptsize @}math.tu-berlin.de}}}
\date{}
\begin{document}\maketitle

\begin{abstract}\noindent
  It is well-known that random attractors of a random dynamical 
  system are generally not unique. 
  We show that for general pullback attractors and weak attractors, 
  there is always a minimal (in the sense of smallest) random 
  attractor which attracts a given family of (possibly random) sets. 
  We provide an example which shows that this property need not hold 
  for forward attractors.  
  We point out that our concept of a random attractor is very general: 
  The family of sets which are attracted is allowed to be completely 
  arbitrary.   
\par\medskip

  \noindent\footnotesize
  \emph{2010 Mathematics Subject Classification} 
  Primary\, 37H99 
  \ Secondary\, 
  34D55 \ 35B51 \ 37B25 \ 37C70 \ 37L99 \ 60H10 \ 60H15 \ 60H25 
\end{abstract}

\noindent{\slshape\bfseries Keywords.} Random attractor; 
pullback attractor; weak attractor; forward attractor; Omega limit set; 
compact random set; closed random set; closed random hull

\section{Introduction}
For deterministic dynamical systems on metric spaces the notion of 
an attractor is well established. 
The most common notion, mainly used for partial differential equations 
(PDEs) on suitable Hilbert or Banach spaces, is that of a \emph{global 
set attractor}. 
It is characterized by being a compact set, being strictly invariant, 
and attracting every compact, or even every bounded set. 
Uniqueness of the global set attractor is immediate.   \par

Another very common notion is that of a \emph{(global) point attractor}, 
which is often used for systems on locally compact spaces (which are 
often Euclidean spaces or finite-dimensional manifolds). 
A global point attractor is again compact and strictly invariant, but 
it is only assumed to attract every point (or, equivalently, every 
finite set). 
The global point attractor is in general not unique, which can be seen 
from the simple dynamical system induced by the scalar differential 
equation $\dot x=x-x^3$. 
Here the unique global set attractor is the interval $[-1,1]$, which, 
of course, is also a point attractor. 
But also $[-1,0]\cup\{1\}$, and $\{-1\}\cup[0,1]$, are point attractors. 
Obviously, there is a minimal point attractor, namely $\{-1,0,1\}$.  \par

In fact, for deterministic systems the following result is well known: 
Whenever $\mathcal B$ is an arbitrary family of non-empty subsets of 
the state space then there exists an attractor for $\mathcal B$ (i.e., 
a compact, strictly invariant set attracting every $B\in\mathcal B$) 
if and only if there exists a compact set such that every 
$B\in\mathcal B$ is attracted by this set. 
Furthermore, there exists a unique minimal attractor for~$\mathcal B$, 
which is given by the closure of the union of the $\omega$-limit sets 
of all elements of~$\mathcal B$. 
This minimal attractor for~$\mathcal B$ is addressed as \emph{the} 
$\mathcal B$-attractor. 
If a global set attractor exists then whenever a $\mathcal B$-attractor 
exists it is always a subset of the global set attractor.  \par

For random dynamical systems an analogous statement has been established 
in~\cite{crauel01}. 
However, this result had to assume a separability condition 
for~$\mathcal B$.  \par

The aim of the present paper is to remove this separability condition, 
i.e.\ to establish that for any family~$\mathcal B$ of (possibly 
even random) sets for which there is a compact random set attracting 
every element of~$\mathcal B$, there exists a unique minimal random 
attractor for~$\mathcal B$.  \\
Furthermore, it is in general not true that this $\mathcal B$-attractor 
is given by the closed random set 
\begin{equation}  \label{tq7}
  \overline{\bigcup_{B\in\mathcal B}\Omega_B(\omega)}\quad\mbox{almost surely}, 
\end{equation}
where $\Omega_B(\omega)$ denotes the (random) $\Omega$-limit set of~$B$. 
This is shown using an example of a random dynamical system induced by 
a stochastic differential equation on the unit circle~$S^1$. 
Here the global set attractor is the whole~$S^1$ (which is a strictly 
invariant compact set). 
If~$\mathcal B$ is taken to be the family of all deterministic points 
in~$S^1$ it is shown that~\eqref{tq7} gives~$S^1$, the global set attractor, 
almost surely. 
However, the minimal point attractor is a one point set consisting of 
a random variable, which (pullback) attracts every solution starting 
in a deterministic point.

\section{Notation and Preliminaries}  \label{sq2}
Let~$E$ be a Polish space, i.e.\ a separable topological space whose 
topology is metrizable with a complete metric. 
Several assertions in the following are formulated in terms of a 
metric~$d$ on~$E$ which is referred to without further mentioning. 
This metric will always be assumed to generate the topology of~$E$ 
and to be complete, even if some of the assertions hold also if~$d$ 
is not complete. 
For $x\in E$ and $A\subset E$ we define 
$d(x,A)=\inf\{d(x,a):\,a\in A\}$ with the convention 
$d(x,\emptyset)=\infty$. 
For non-empty subsets~$A$ and~$B$ of~$E$ we denote the Hausdorff 
semi-metric by $d(B,A):=\sup\{d(b,A):\,b\in B\}$ and 
define $d(\emptyset,A):=0$ and $d(B,\emptyset):=\infty$ in case 
$B\neq\emptyset$.  \\
We note that with this convention both for the empty family 
$\mathcal B=\emptyset$ as well as for $\mathcal B=\{\emptyset\}$ 
the empty set $A=\emptyset$ is an attractor, in fact the minimal 
one.  \par

We denote the Borel $\sigma$-algebra on~$E$ (i.e.\ the smallest 
$\sigma$-algebra on $E$ which contains every open set) by $\EE$.  \par

Suppose that $(\Omega,\mathscr F,\P)$ is a probability space, 
$\T_1 \in \{\Z,\R\}$, and 
\begin{align*}
  \vartheta:\mathbb \T_1\times\Omega&\to\Omega  \\
  (t,\omega)&\mapsto\vartheta_t\omega
\end{align*}
is a measurable map, such that $\vartheta_t:\Omega\to\Omega$ 
preserves~$\P$, and such that 
$\vartheta_{t+s}=\vartheta_t\scirc\vartheta_s$ for all 
$s,t\in\mathbb \T_1$ and $\vartheta_0={\mathrm{id}}$. 
Thus $(\vartheta_t)$ is a classical measurable dynamical system 
on $(\Omega,\mathscr F,\P)$.

\begin{defn}  \label{tq1}%
Given $(\Omega,\mathscr F,\P)$ and $\vartheta_t,\,t \in \T_1$ as above, 
$E$ a Polish space, and~$\T_2$ either~$\R$, $[0,\infty)$, 
$\Z$, or~$\N_0$ such that $\T_2 \subseteq \T_1$, a measurable map 
\begin{eqnarray*}
  \varphi:\T_2\times E\times\Omega&\to&E  \\
  (t,x,\omega)&\mapsto&\varphi(t,\omega)x
\end{eqnarray*}
(or the pair $(\varphi,\vartheta)$) is a \emph{random dynamical system 
(RDS)} on~$E$ if 
\begin{enumerate}[(i)]
\item $\varphi(t,\omega):E\to E$ is continuous for every $t\in \T_2$, 
  $\P$-almost surely 
\item
  $\varphi(t+s,\omega)
  =\varphi(t,\vartheta_s\omega)\scirc\varphi(s,\omega)$ 
  for all $s,t\in \T_2$, and $\varphi(0,\omega)={\mathrm{id}}$, 
  for~$\P$-almost all~$\omega$.
\end{enumerate}
\end{defn}

\begin{bemn}\label{wichtig}
\begin{enumerate}[(i)]
\item Note that we do not assume continuity in~$t$ here.  
\item An RDS $(\varphi,\vartheta)$ is said to be \emph{two-sided} if~$\T_2$ 
  is two-sided. 
  For a two-sided RDS~$\varphi$ the maps $\varphi(t,\omega)$ are 
  invertible, and $\varphi(t,\omega)^{-1}=\varphi(-t,\vartheta_t\omega)$ 
  a.\,s.
\item Proposition~\ref{perf1} shows that one can always change~$\varphi$ 
  on a set of measure 0 in such a way that properties~(i) and~(ii) in 
  Definition~\ref{tq1} hold without exceptional sets. 
  In the following we will tacitly assume that~$\varphi$ satisfies these 
  slightly stronger assumptions. 
  The question whether exceptional sets of measure zero in~(ii) of 
  Definition~\ref{tq1} which may depend on~$s$ and~$t$ can be eliminated 
  (without destroying possible (right-)\-continuity properties of~$\varphi$ 
  in the time variable) has been addressed in~\cite{arnold}, 
  \cite{arnold-scheutzow}, \cite{Scheutzow96}, and \cite{Kager-Scheutzow}. 
\end{enumerate}
\end{bemn}

\begin{defn}
  Let $(\Omega,\mathscr F,\mathbb P)$ be a probability space and~$E$ a 
  Polish space. 
  A \emph{random set}~$C$ is a measurable subset of $E\times\Omega$ 
  (with respect to the product $\sigma$-algebra $\EE\otimes\mathscr F$). 
 \end{defn}

 The $\omega$-section of a set~$C\subset E \times \Omega$ is defined by 
 \begin{equation*}
   C(\omega)=\{x:(x,\omega)\in C\},\quad\omega\in\Omega. 
 \end{equation*}
In the case that a set~$C\subset E\times\Omega$ has closed or 
compact $\omega$-sections it is a random set as soon as the mapping 
$\omega\mapsto d\bigl(x,C(\omega)\bigr)$ is measurable (from~$\Omega$ 
to $[0,\infty)$) for every $x\in E$, see~\cite[Chapter~2]{crauel02}. 
Then~$C$ will be said to be a \emph{closed} or a \emph{compact}, 
respectively, random set. For any set~$C\subset E\times\Omega$, we define
$\overline{C}:=\{(x,\omega):\,x\in \overline{C(\omega)}\}$

\begin{bem}
  Note that it does not suffice to define random sets by demanding 
  $\omega\mapsto d\bigl(x,C(\omega)\bigr)$ to be measurable 
  for every $x\in E$. 
  In this case the associated $\{(x,\omega):x\in C(\omega)\}$ need 
  not be an element of $\EE \otimes \mathscr F$, 
  see~\cite[Remark~4]{crauel-kloeden15}. 
\end{bem}

\begin{defn}
If~$\varphi$ is an RDS then a set $D\subset E \times \Omega$ 
is said to be \emph{forward invariant} or \emph{strictly invariant}, 
respectively, with respect to~$\varphi$, if 
$\varphi(t,\omega)D(\omega)\subset D(\vartheta_t\omega)$ or 
$\varphi(t,\omega)D(\omega)=D(\vartheta_t\omega)$ $\P$-a.\,s., 
respectively, for every $t\geq0$.  
\end{defn}

\begin{defn}
For $B\subset E\times\Omega$ we define the 
\emph{$\Omega$-limit set of $B$} by 
\begin{equation*}
  \Omega_B(\omega)=\bigcap_{T\geq0}
  \overline{\bigcup_{t\geq T}
    \varphi(t,\vartheta_{-t}\omega)(B(\vartheta_{-t}\omega))}, 
  \quad \omega\in\Omega, 
\end{equation*}
as in~\cite[Definition 3.4]{crauel99}. 
\end{defn}

\begin{bem}
  It is easy to verify that an $\Omega$-limit set is 
  always forward invariant and also that it is strictly   
  invariant for an RDS with two-sided time.  

  The following Lemma, which generalizes \cite[Theorem~3.4]{crauel01}, 
  provides another sufficient condition for $\Omega_B$ to be strictly 
  invariant. 
\end{bem}

\begin{lemma}  \label{tq2}
Suppose that~$B\subset E\times\Omega$, $K\subset E\times\Omega$, and 
$\Omega_0 \subset \Omega$. 
Assume that for all $\omega \in \Omega_0$, the set $K(\omega)$ is 
compact and  
\begin{equation}  \label{tq20}
\lim_{t\to\infty}
  d\bigl(\varphi(t,\vartheta_{-t}\omega)B(\vartheta_{-t}\omega),
         K(\omega)\bigr)=0.  
\end{equation}
Then, for every $t\geq0$ and $\omega \in \Omega_0$, 
$\Omega_B(\vartheta_t\omega)\subset\varphi(t,\omega)\Omega_B(\omega)$ 
and $\Omega_B(\omega)\subset K(\omega)$. 
If, moreover, $\Omega_0\in\mathscr{F}$ and $\P(\Omega_0)=1$, 
then~$\Omega_B$ is strictly invariant.
\end{lemma}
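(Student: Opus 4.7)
The proof splits naturally into three parts, handled in the following order.

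First, I would show $\Omega_B(\omega)\subset K(\omega)$ for $\omega\in\Omega_0$. Take $x\in\Omega_B(\omega)$. By the definition of $\Omega_B$, there exist $t_n\to\infty$ and $x_n\in\varphi(t_n,\vartheta_{-t_n}\omega)B(\vartheta_{-t_n}\omega)$ with $x_n\to x$. By \eqref{tq20}, $d(x_n,K(\omega))\to 0$, and since $K(\omega)$ is compact (hence closed), $x\in K(\omega)$.

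Second, and this is the main part, I would prove $\Omega_B(\vartheta_t\omega)\subset\varphi(t,\omega)\Omega_B(\omega)$ for fixed $t\geq 0$ and $\omega\in\Omega_0$. Let $y\in\Omega_B(\vartheta_t\omega)$; pick $s_n\to\infty$ and $y_n\in\varphi(s_n,\vartheta_{-s_n+t}\omega)B(\vartheta_{-s_n+t}\omega)$ with $y_n\to y$. For $n$ large set $r_n:=s_n-t\to\infty$; the cocycle property gives
\begin{equation*}
  \varphi(s_n,\vartheta_{-r_n}\omega)
  =\varphi(t,\omega)\scirc\varphi(r_n,\vartheta_{-r_n}\omega),
\end{equation*}
so $y_n=\varphi(t,\omega)z_n$ for some $z_n\in\varphi(r_n,\vartheta_{-r_n}\omega)B(\vartheta_{-r_n}\omega)$. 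By \eqref{tq20} we have $d(z_n,K(\omega))\to 0$, and since $K(\omega)$ is compact, a subsequence $z_{n_k}$ converges to some $z\in K(\omega)$. The point $z$ is a limit of points in $\varphi(r_{n_k},\vartheta_{-r_{n_k}}\omega)B(\vartheta_{-r_{n_k}}\omega)$ with $r_{n_k}\to\infty$, hence $z\in\Omega_B(\omega)$. Continuity of $\varphi(t,\omega)$ yields $y=\lim_k\varphi(t,\omega)z_{n_k}=\varphi(t,\omega)z\in\varphi(t,\omega)\Omega_B(\omega)$.

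Third, the opposite inclusion $\varphi(t,\omega)\Omega_B(\omega)\subset\Omega_B(\vartheta_t\omega)$ is exactly forward invariance, which is mentioned in the remark preceding the lemma; it follows again by the cocycle property together with continuity of $\varphi(t,\omega)$. Combining both inclusions gives $\varphi(t,\omega)\Omega_B(\omega)=\Omega_B(\vartheta_t\omega)$ for all $t\geq 0$ and $\omega\in\Omega_0$. When $\Omega_0\in\mathscr F$ with $\P(\Omega_0)=1$, this is precisely the almost-sure identity required for strict invariance.

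The main obstacle is the second step: the reverse inclusion $\Omega_B(\vartheta_t\omega)\subset\varphi(t,\omega)\Omega_B(\omega)$ does not hold for general $B$, and the key input that makes it work here is the compactness of $K(\omega)$, which supplies the convergent subsequence $z_{n_k}\to z$ needed to pull a limit point $y$ back through $\varphi(t,\omega)$. Without compactness one cannot extract this subsequence, and one only knows $z_n$ clusters near $K(\omega)$, not that it converges.
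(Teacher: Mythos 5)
Your proof is correct and follows essentially the same route as the paper: closedness of $K(\omega)$ gives $\Omega_B(\omega)\subset K(\omega)$, and the key step of extracting a convergent subsequence $z_{n_k}\to z\in\Omega_B(\omega)$ via compactness of $K(\omega)$ and then applying continuity of $\varphi(t,\omega)$ is exactly the paper's argument. The final appeal to forward invariance of $\Omega_B$ for the reverse inclusion also matches the paper.
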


\begin{proof}
Fix $\omega\in\Omega_0$ throughout the proof. 
Compactness of $K(\omega)$ and~\eqref{tq20} imply 
$\Omega_B(\omega)\subset K(\omega)$. 
Suppose that $y\in\Omega_B(\vartheta_t\omega)$ for some $t\geq0$. 
Then 
$y=\lim_{n\to\infty}\varphi(t_n,\vartheta_{-t_n}(\vartheta_t\omega))b_n$ 
for sequences $t_n\to\infty$ and $b_n\in B(\vartheta_{t-t_n}\omega)$. 
Consider the sequence 
$\varphi(t_n-t,\vartheta_{-(t_n-t)}\omega)b_n$, defined for~$n$ 
with $t_n-t\geq0$. 
By~\eqref{tq20} we have 
$\lim_{n \to \infty} 
  d\bigl(\varphi(t_n-t,\vartheta_{-(t_n-t)}\omega)b_n,K(\omega)\bigr)
  =0$ 
for $n\to\infty$. 
Compactness of~$K(\omega)$ implies that this sequence has a convergent 
subsequence. 
Choose one and denote its limit by $z(\omega)$, then  
$z(\omega)\in\Omega_B(\omega)$. 
Using the same notation for the subsequence, continuity of 
$\varphi(t,\omega)$ implies 
\begin{equation*}
  \varphi(t,\omega)z(\omega)
  =\lim_{n\to\infty}\varphi(t_n,\vartheta_{-(t_n-t)}\omega)b_n
  =y.  
\end{equation*}
Thus, for any $y\in\Omega_B(\vartheta_t\omega)$ there exists 
$z\in\Omega_B(\omega)$ with $\varphi(t,\omega)z=y$, whence 
$\Omega_B(\vartheta_t\omega)\subset\varphi(t,\omega)\Omega_B(\omega)$.

The final statement of Lemma~\ref{tq2} follows since $\Omega_B$ is 
forward invariant. 
\end{proof}

Now let~$\mathcal B$ be a non-empty family of sets 
$B\subset E\times\Omega$. 
At this point we make no measurability assumptions on the sets in~$\B$ 
and therefore say that a property depending on~$\omega$ holds 
\emph{almost surely} if there is a measurable set of full measure on 
which the property holds. 
Analogously, we will interpret statements like ``$Y\to 0$ in 
probability'' for a real-valued (possibly non-measurable) function 
$\omega\mapsto Y(\omega)$. 
As usual, we introduce the \emph{universal completion}~$\mathscr F^{\mathrm u}$ 
of~$\mathscr F$ as the intersection of all completions of~$\FF$ with 
respect to probability measures on~$\FF$. 
Note that one automatically has $\FF^{\mathrm u}=\FF$ in the case 
of a complete probability space~$(\Omega,\FF,\P)$.   \par
 
We define the concept of a random pullback, forward and weak $\B$-attractor 
of an RDS $(\varphi,\vartheta)$ as usual (except that we do not impose any 
measurability assumptions on~$\B$). 
Random pullback attractors were first introduced in~\cite{crauel-flandoli} 
while the concept of a weak attractor is due to G.~Ochs~\cite{ochs}. 

\begin{defn}\label{tq3} 
Suppose that~$\varphi$ is an RDS on a Polish space~$E$ and~$\mathcal B$ 
is a non-empty family of subsets of $E\times\Omega$. 
Then a set $A\subset E\times\Omega$ is a \emph{random attractor 
for~$\mathcal B$} if
\begin{enumerate}[(i)]
\item $A$ is a compact random set
\item $A$ is strictly $\varphi$-invariant, i.e.\ 
  \begin{equation*}
    \varphi(t,\omega)A(\omega)=A(\vartheta_t\omega)
  \end{equation*}
   $\P$-almost surely for every $t\in \T_2$ with $t\geq0$ 
\item $A$ attracts~$\mathcal B$, i.e. 
  \begin{equation}  \label{tq98}
    \lim_{t\to\infty}d\bigl(\varphi(t,\vartheta_{-t}\omega)B(\vartheta_{-t}\omega), 
    A(\omega)\bigr)=0\qquad \P\mbox{-a.s.}
  \end{equation}
  for every $B\in\mathcal B$; 
  in this case~$A$ is a \emph{random pullback attractor for~$\mathcal B$},  \\
  or 
  \begin{equation*} 
    \lim_{t\to\infty}
    d\bigl(\varphi(t,\omega)B(\omega),A(\vartheta_t\omega)\bigr)=0\qquad\P\mbox{-a.s.},
  \end{equation*}
  for every $B\in\mathcal B$; 
  in this case~$A$ is a \emph{random forward attractor for~$\mathcal B$}, \\ 
  or
  \begin{equation}  \label{tq11}
    \lim_{t\to\infty}d\bigl(\varphi(t,\omega)B(\omega), 
    A(\vartheta_t\omega)\bigr)=0\qquad \mbox{in probability}
  \end{equation}
  for every $B\in\mathcal B$; 
  in this case~$A$ is a \emph{weak (random) attractor for~$\mathcal B$}. 
\end{enumerate}
\end{defn}

\begin{bem}
  Note that the property of being a pullback or weak attractor does 
  not depend on the choice of the metric~$d$ metrizing the topology 
  of~$E$. 
  This is not true for forward attactors, not even when $E=\R$, 
  see~\cite[Example 2.4]{Flandoli-Gess-Scheutzow1} or the example 
  in Section~\ref{non}. 
\end{bem}

\begin{bemn}\label{rem3}
  (i) \ For a weak random attractor condition~\eqref{tq11} is equivalent 
  to~\eqref{tq98} with almost sure convergence replaced by convergence 
  in probability. 
  See~\cite{crauel-dimitroff-scheutzow} for sufficient conditions 
  for the existence of weak global set attractors.  For \emph{monotone} 
  RDS, the concept of a weak attractor turns out to be more suitable 
  than that of a pullback attractor, see~\cite{CS04} 
  and~\cite{Flandoli-Gess-Scheutzow2}.  \par

  (ii) \ While for non-autonomous systems it is very simple to find 
  examples of attractors which are pullback but not forward, and vice 
  versa (see, e.g., \cite{crauel-kloeden15}), this is not so simple 
  for random attractors. 
  One of the authors~\cite{Scheutzow02} has constructed examples for this 
  to happen as well as examples where only weak random attractors 
  exist which are neither pullback nor forward attractors.  \par

  (iii) Clearly each random pullback attractor~$A$ for~$\B$ must satisfy 
  $\Omega_B(\omega)\subset A(\omega)$ almost surely for every $B\in\B$ 
  (the exceptional sets may depend on~$B$).  \par

  (iv) Instead of assuming an attractor~$A$ to be a compact random set 
  it suffices to assume~$A$ to be a random set such that $A(\omega)$ is 
  compact~$\mathbb P$-almost surely, which is slighty weaker 
  (see~\cite[Proposition~2.4]{crauel02} for details). 
  Proposition~\ref{lemma} below, applied to the singleton~$A$, implies 
  existence of a compact random set satisfying the conditions of 
  Definition~\ref{tq3}. 
\end{bemn}

\begin{bem}
  For a general family~$\mathcal B$ there may or may not exist a 
  (pullback, forward or weak) random $\B$-attractor for an RDS~$\varphi$ 
  and if such an attractor exists then it need not be unique in general.   
  However, as soon as~$\mathcal B$ contains every compact deterministic 
  set, then whenever a weak random attractor for~$\mathcal B$ exists then 
  it is unique, see~\cite[Lemma~1.3]{Flandoli-Gess-Scheutzow1}. 
  Since every pullback and every forward attractor is also a weak 
  attractor, the same uniqueness statement holds for pullback and 
  forward attractors (for pullback attractors this was already 
  established in~\cite{crauel99}). 
  
  Whenever a $\B$-attractor (in whatever sense) is not unique it 
  is natural to ask whether there is a smallest (or minimal) 
  $\B$-attractor. 
  In~\cite[Theorem~3.4]{crauel01} a condition on~$\mathcal B$ (formulated 
  only for families of deterministic sets) for the existence of a minimal 
  random pullback attractor has been given.  It is one of the aims of 
  this paper to show that a minimal random pullback (respectively weak) 
  attractor exists for arbitrary~$\mathcal B$, provided existence of at 
  least one pullback (respectively weak) $\mathcal B$-attractor.  
\end{bem}

\section{Existence of a minimal pullback attractor for~$\B$}

Given a random dynamical system $(\varphi,\vartheta)$ on the space 
$(\Omega,\FF,\P)$ taking values in the Polish space~$(E,d)$ we 
consider a general family~$\B$ of subsets of $E\times\Omega$ for 
which we assume existence of a pullback attractor or, equivalently, 
the existence of an attracting compact random set. 
Note that no measurability conditions whatsoever are imposed on 
(the elements of)~$\B$. 
The major result of this section is the existence of a minimal random 
pullback attractor for~$\B$. 
Of course this is immediate as soon as~$\B$ contains every compact 
deterministic subset of~$E$ (or, more precisely, every $C\times\Omega$ 
with $C\subset E$ compact); in this case the minimal attractor is 
the unique global set attractor. 
However, for instance for~$\B$ consisting of all (deterministic) 
points (or, equivalently, of all finite sets), there may be several 
random pullback attractors.

\begin{thm}\label{propo} 
  Let~$\B$ be an arbitrary family of sets $B \subset E \times \Omega$ and 
  let $(\varphi,\vartheta)$ be an RDS on~$E$. 
\begin{enumerate}[(i)]
\item Assume that there exists a random set $\K \subset E \times \Omega$ 
  such that $\K(\omega)$ is $\P$-a.s.\ compact and $\K$ attracts $\B$, 
  i.e.\ 
  \begin{equation*}
    \lim_{t\to\infty}d\bigl(\varphi(t,\vartheta_{-t}\omega)B(\vartheta_{-t}\omega), 
    \K(\omega)\bigr)=0\qquad \P\mbox{-a.s.}
  \end{equation*}
  (and therefore $\Omega_B(\omega) \subset \K(\omega)$ a.s.) for every 
  $B\in\B$. 
  Then $(\varphi,\vartheta)$ has at least one pullback attractor for~$\B$. 
\item If $(\varphi,\vartheta)$ has at least one pullback attractor 
  for~$\B$, then the RDS has a \emph{minimal} pullback $\B$-attractor~$A$. 
  In addition, there is a countable sub-family~$\B_0\subset\B$ such 
  that~$A$ is also the minimal pullback $\B_0$-attractor. 
\item Under the assumptions of (i) the minimal pullback $\B$-attractor~$A$ 
  is almost surely contained in $\K(\omega)$. 
\end{enumerate}
\end{thm}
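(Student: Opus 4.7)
The plan is to construct the minimal attractor as the \emph{closed random hull} of the family $\{\Omega_B : B \in \B\}$ of $\Omega$-limit sets. First, for part~(i), I would observe that the hypothesis on $\K$ together with Lemma~\ref{tq2} yields $\Omega_B(\omega) \subset \K(\omega)$ almost surely and, on a full-measure set, the strict invariance $\varphi(t,\omega)\Omega_B(\omega) = \Omega_B(\vartheta_t\omega)$; thus each $\Omega_B$ is a strictly invariant set trapped inside the compact random set $\K$. Under the weaker hypothesis of part~(ii) alone, the same conclusion holds with $\K$ replaced by any fixed pullback $\B$-attractor $A_0$, using Remark~\ref{rem3}(iii).

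The central step is to replace the uncountable collection $\{\Omega_B\}_{B \in \B}$ by a countable sub-family without enlarging the closure. The idea is a closed-random-hull construction: within the compact random set $\K$ one extracts a countable sub-family $\B_0 = \{B_n\}_{n \geq 1} \subset \B$ and forms the closed random set
\[
   A(\omega) := \overline{\bigcup_{n \geq 1} \Omega_{B_n}(\omega)}
\]
with the property that $\Omega_B(\omega) \subset A(\omega)$ almost surely for \emph{every} $B \in \B$ (not merely for $B \in \B_0$). This countable reduction in the absence of any measurability hypothesis on~$\B$ is the main obstacle of the paper; I would expect it to be carried out by exploiting separability of $E$ and a Lindel\"of-type argument inside the compact fibres of $\K$. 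Once it is available, the inclusion $A \subset \K$ almost surely is built in, so $A$ is automatically a compact random set, which settles part~(iii).

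Strict invariance of $A$ then follows by commuting the continuous map $\varphi(t,\omega)$ with the countable union and the closure, and applying the strict invariance of each $\Omega_{B_n}$:
\[
   \varphi(t,\omega)A(\omega)
   = \overline{\bigcup_n \varphi(t,\omega)\Omega_{B_n}(\omega)}
   = \overline{\bigcup_n \Omega_{B_n}(\vartheta_t\omega)}
   = A(\vartheta_t\omega)\quad\mbox{a.s.}
\]
Attraction of an arbitrary $B \in \B$ I would prove by contradiction: if $d\bigl(\varphi(t_n,\vartheta_{-t_n}\omega)B(\vartheta_{-t_n}\omega),A(\omega)\bigr) \geq \varepsilon$ along some $t_n \to \infty$, then attraction by $\K$ and compactness of $\K(\omega)$ extract a subsequential limit, which by definition of $\Omega_B$ lies in $\Omega_B(\omega) \subset A(\omega)$, a contradiction. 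Finally, minimality is immediate: any pullback $\B$-attractor $A'$ contains each $\Omega_B$ almost surely by Remark~\ref{rem3}(iii), hence contains the countable closed hull $A$; so $A$ is simultaneously the minimal $\B_0$-attractor and the minimal $\B$-attractor.
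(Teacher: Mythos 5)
Your overall architecture matches the paper's (hull of the $\Omega$-limit sets, countable reduction, compactness inherited from $\K$, strict invariance via Lemma~\ref{tq2}, attraction by a compactness/subsequence argument, minimality from Remark~\ref{rem3}(iii)), but the step you single out as ``the main obstacle'' is left essentially unproved, and the method you sketch for it does not work. A Lindel\"of-type argument inside the compact fibres of $\K$ operates one $\omega$ at a time: for each fixed $\omega$ it produces a countable subfamily of $\B$ whose $\Omega$-limit sets are dense in $\bigcup_{B\in\B}\Omega_B(\omega)$, but this subfamily depends on $\omega$, and since no measurability whatsoever is assumed on the elements of $\B$, there is no measurable selection turning these fibrewise choices into a single countable $\B_0$ that works for almost every $\omega$ simultaneously. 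The paper's resolution (Propositions~\ref{singleset} and~\ref{lemma}) is measure-theoretic rather than topological: for each set $G$ ranging over a countable family of closed sets $R(x,1/n)$ coming from a locally finite closed refinement of the balls (Lemma~\ref{cover}), one maximizes $\P(M_B^G)$, the (outer) probability that $\Omega_B$ meets $G$, over $B\in\B$, attains the supremum along a sequence $B_j(G)$, and takes $\B_0$ to be the union of all these sequences; minimality then rests on the projection theorem. None of this is recoverable from separability of $E$ alone.

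A second, related gap: your candidate $A(\omega):=\overline{\bigcup_n\Omega_{B_n}(\omega)}$ need not be a \emph{random} set. Since the $B_n$ carry no measurability, the maps $\omega\mapsto\Omega_{B_n}(\omega)$ need not be measurable, so neither is the closure of their union; condition (i) of Definition~\ref{tq3} is then not verified. The paper avoids this by first replacing each $\Omega_B$ by its closed random hull $\hat\Omega_B$ (Proposition~\ref{singleset}, built from measurable covers $\tilde M^G$ of the non-measurable hitting events), and only then forming the countable union -- using covers $R(x,r)$ whose arbitrary subunions remain closed (Remark~\ref{coverbem}), precisely so that closedness and measurability can be obtained at the same time. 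The parts of your argument that are complete (attraction by contradiction using compactness of $\K(\omega)$, strict invariance by commuting $\varphi(t,\omega)$ with the closure of a relatively compact set, and minimality from $\Omega_B\subset A'$ for any competing attractor $A'$) are correct and indeed make explicit some steps the paper leaves implicit, but they all presuppose the hull construction you have not supplied.
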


Note that under the assumptions of~(ii) any pullback attractor~$\K$ 
for~$\B$ will satisfy the assumptions of~(i). 
Therefore, it suffices to show that under the assumptions of~(i) there 
exists a minimal pullback attractor for~$\B$ and that it satisfies 
the conclusions of~(ii) and~(iii).  \par

For the proof of the theorem we are going to prepare several results which 
will also be used in the next section, where we investigate the same 
question for weak attractors. 
For the first few results we just need an arbitrary probability space 
$(\Omega,\FF,\P)$ and a Polish metric space $(E,d)$, but no RDS\@. 
We will denote the open ball with centre $x\in E$ and radius $r>0$ 
by~$B(x,r)$. 
By~$D$ we always denote a fixed countable dense set in~$E$. 
The following lemma just requires a separable metric (not necessarily 
Polish) space $(E,d)$. 
It is a straightforward consequence of classical results in topology. 

\begin{lemma}\label{cover}
  For each fixed $r>0$ there exists a cover $R(x,r)$, $x\in D$, of~$E$ such 
  that $R(x,r)$ is a closed (possibly empty) subset of $B(x,r)$, and for each 
  $y \in E$ there exists a neighbourhood of~$y$ which intersects only finitely 
  many of the sets $R(x,r)$, $x\in D$, and there exists $x\in D$ such 
  that~$y$ is in the interior of $R(x,r)$. 
\end{lemma}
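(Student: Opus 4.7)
The plan is to use the fact that every metric space is paracompact (Stone's theorem) and apply it to a suitable open cover indexed by the countable dense set $D$. The key observation is that $\{B(x,r/2):x\in D\}$ already covers $E$: given $y\in E$, density of $D$ yields some $x\in D$ with $d(x,y)<r/2$, i.e., $y\in B(x,r/2)$.

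First, I would invoke paracompactness to obtain a locally finite open refinement $(V_\alpha)_{\alpha\in A}$ of the cover $\{B(x,r/2):x\in D\}$. Since $E$ is separable metric, any locally finite family of non-empty open sets is countable, so one may assume $A$ is countable. For each $\alpha\in A$ I would choose a label $x_\alpha\in D$ with $V_\alpha\subset B(x_\alpha,r/2)$.

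Next, for each $x\in D$, set
\[
R(x,r):=\overline{\bigcup_{\alpha\in A:\,x_\alpha=x}V_\alpha},
\]
with the convention $R(x,r):=\emptyset$ when no index is mapped to $x$. The four required properties would then be checked in turn. Closedness is automatic. For containment in $B(x,r)$: the union lies in $B(x,r/2)$, so its closure lies in $\{y:d(y,x)\le r/2\}$, which is a proper subset of the open ball $B(x,r)$. Covering holds because $\{V_\alpha\}$ already covers $E$ and each $V_\alpha\subset R(x_\alpha,r)$. For local finiteness, given $y\in E$ take an open neighbourhood $U$ that meets only finitely many $V_\alpha$; since $U$ is open, $U$ meets the closure $R(x,r)$ only when $U$ meets some $V_\alpha$ with $x_\alpha=x$, so only finitely many values of $x\in D$ can occur. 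Finally, the same $V_\alpha$ that contains $y$ is open and contained in $R(x_\alpha,r)$, so $y$ lies in its interior.

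The one delicate point is that the closure appearing in the definition of $R(x,r)$ must not escape the \emph{open} ball $B(x,r)$. This is exactly what forces the initial refinement to be taken against balls of strictly smaller radius, here $r/2$; with this choice, $\overline{B(x,r/2)}\subsetneq B(x,r)$ and the containment survives taking closures. Beyond this bookkeeping, everything reduces to a routine verification once paracompactness supplies the locally finite open refinement.
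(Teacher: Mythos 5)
Your proof is correct and follows essentially the same route as the paper: the paper simply cites paracompactness of metric spaces together with Theorem VIII.4.2 of Dugundji applied to the open cover by balls centred in $D$, whereas you carry out the derivation from raw paracompactness yourself (locally finite open refinement of the balls of radius $r/2$, regrouping by labels in $D$, and taking closures). All the verifications you give --- closedness, containment in the open ball via the $r/2$ shrinking, the covering by interiors, and the fact that an open set meeting a closure must meet the set itself (which yields local finiteness) --- are sound.
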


\begin{proof}
  For given $r>0$, the family $B(x,r)$, $x\in D$, is an open cover of~$E$. 
  Since every metric space is paracompact, 
  Theorem  VIII.4.2 in  Dugundji~\cite{Du76}
  implies the existence of a cover $R(x,r)$, $x\in D$, as claimed in the 
  lemma. 
\end{proof}

\begin{bem}\label{coverbem}
  Note that for each $r>0$ the closed cover $R(x,r)$, $x\in D$, given in 
  Lemma~\ref{cover}, has the property that for every (possibly infinite) 
  subset $D_0\subset D$ the union $\bigcup_{x \in D_0}R(x,r)$ 
  is closed. 
  This will be important in the following. 
\end{bem}

\begin{bem}\label{coverbem2}
Note that for each $r>0$, the family $\mathring R(x,r),\,x\in D$, is 
an open cover of $E$ thanks to the last assertion of Lemma~\ref{cover} 
(here~$\mathring S$ denotes the interior of the set~$S$). 
\end{bem}

In the following $R(x,r)$, $x\in D$, will always denote a closed 
cover as in Lemma~\ref{cover} (which, of course, is not unique). 
The following result asserts that every subset of~$E\times\Omega$ 
has a \emph{closed random hull}.  

\begin{prop}\label{singleset}
  Let $K\subset E\times\Omega$. 
  There exists a (unique) smallest closed random set $\hat K$ which 
  contains~$K$ in the following sense: $K \subset \hat K$ and for 
  every random set~$S$ for which $K(\omega)\subset S(\omega)$ for 
  almost all $\omega\in\Omega$ and for which $S(\omega)$ is closed 
  for almost all $\omega\in\Omega$, we have 
  $\hat K(\omega)\subset S(\omega)$ for almost all $\omega \in \Omega$. 
\end{prop}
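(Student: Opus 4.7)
The plan is to construct $\hat K$ explicitly from the closed cover $R(x,r)$ of Lemma \ref{cover}, using measurable hulls to remedy the lack of measurability of the sets encoding ``$K$ meets a small closed piece''. For each $n\geq1$ and $x\in D$ I define
$$
A_n(x):=\{\omega\in\Omega : K(\omega)\cap R(x,1/n)\neq\emptyset\}
$$
and choose an $\FF$-measurable hull $B_n(x)\supset A_n(x)$ (any $\FF$-measurable superset with $\P(B_n(x))=\P^*(A_n(x))$; such a hull exists for every subset of $\Omega$). Then I set
$$
\hat K:=\bigcap_{n\geq1}\ \bigcup_{x\in D}\bigl(R(x,1/n)\times B_n(x)\bigr)\ \subset\ E\times\Omega.
$$

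Next I verify the three required properties. Being a countable union and intersection of measurable rectangles, $\hat K$ is $\EE\otimes\FF$-measurable, and by Remark \ref{coverbem} its $\omega$-section $\bigcap_n\bigcup_{x\in D:\,\omega\in B_n(x)}R(x,1/n)$ is closed, so $\hat K$ is a closed random set. For $K\subset\hat K$: given $(y,\omega)$ with $y\in K(\omega)$, Remark \ref{coverbem2} provides for each $n$ some $x_n\in D$ with $y\in\mathring R(x_n,1/n)$, forcing $\omega\in A_n(x_n)\subset B_n(x_n)$, and hence $(y,\omega)$ into every factor of the intersection.

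For minimality, let $S$ be any closed random set with $K(\omega)\subset S(\omega)$ off a null set $N$. If $\omega\in A_n(x)\setminus N$ then any $k\in K(\omega)\cap R(x,1/n)$ lies in $S(\omega)\cap B(x,1/n)$, so $d(x,S(\omega))<1/n$; hence $A_n(x)\subset M_n'(x)\cup N$, where $M_n'(x):=\{\omega:d(x,S(\omega))<1/n\}\in\FF$ because $S$ is a closed random set. The defining property of the hull gives $\P\bigl(B_n(x)\setminus(M_n'(x)\cup N)\bigr)=0$; collecting these countably many null sets into a single $N'\supset N$, every $\omega\notin N'$ satisfies $d(x,S(\omega))<1/n$ whenever $\omega\in B_n(x)$. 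Now pick $\omega\notin N'$ and $y\in\hat K(\omega)$: for each $n$ there is $x_n\in D$ with $\omega\in B_n(x_n)$ and $y\in R(x_n,1/n)\subset B(x_n,1/n)$, so $d(y,S(\omega))\leq d(y,x_n)+d(x_n,S(\omega))<2/n$, and closedness of $S(\omega)$ forces $y\in S(\omega)$. Thus $\hat K\subset S$ almost surely; uniqueness up to null sets follows by applying this with the roles of two candidate minimal hulls reversed.

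The only real delicacy is that $A_n(x)$ need not be $\FF$-measurable. On the construction side I bypass this by replacing $A_n(x)$ with its hull $B_n(x)$; on the minimality side I bypass it by pitting $B_n(x)$ not against the potentially non-measurable event $\{S\cap R(x,1/n)\neq\emptyset\}$ but against the genuinely $\FF$-measurable event $\{d(x,S(\omega))<1/n\}$, which is where the inclusion $R(x,1/n)\subset B(x,1/n)$ built into Lemma \ref{cover} becomes essential.
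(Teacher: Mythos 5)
Your construction is the paper's own, almost verbatim: the sets $A_n(x)$ are the paper's $M^{R(x,1/n)}$, your hulls $B_n(x)$ are its $\tilde M^{R(x,1/n)}$, and the formula for $\hat K$ coincides with \eqref{hatK}. The verification that $\hat K$ is a closed random set and contains $K$, and the final $2/n$-approximation argument, also match the paper.

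There is, however, one step where your justification does not match the hypotheses. You claim $M_n'(x)=\{\omega:d(x,S(\omega))<1/n\}\in\FF$ ``because $S$ is a closed random set.'' But the proposition only assumes that $S$ is a \emph{random set} (an element of $\EE\otimes\FF$) whose sections are almost surely closed; it is \emph{not} assumed that $\omega\mapsto d(x,S(\omega))$ is $\FF$-measurable, and the paper's own remark after the definition of closed random sets stresses that these two notions do not coincide in general. As it stands, your ``genuinely $\FF$-measurable event'' is not known to be in $\FF$, so the hull comparison $\P\bigl(B_n(x)\setminus(M_n'(x)\cup N)\bigr)=0$ is not yet licensed. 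This is exactly the point where the paper invokes the projection theorem (Dellacherie--Meyer III.44): since $E$ is Polish and $S\in\EE\otimes\FF$, the set $\{\omega:S(\omega)\cap G\neq\emptyset\}$ lies in the universal completion $\FF^{\mathrm u}$. Your argument is easily repaired the same way: $M_n'(x)$ differs from the projection of $S\cap(B(x,1/n)\times\Omega)$ only on the null set where $S(\omega)$ fails to be closed, hence lies in $\FF^{\mathrm u}$ up to a null set, and the minimality property of the measurable hull $B_n(x)$ holds against $\FF^{\mathrm u}$-measurable supersets as well (every set in $\FF^{\mathrm u}$ agrees with a set in $\FF$ up to a $\P$-null set). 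With that substitution the rest of your minimality argument, and the uniqueness conclusion, go through.
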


\begin{proof}
  For $G \in \EE$ define 
  \begin{equation*}
    M^{G}:=\{\omega: K(\omega)\cap G \neq \emptyset\}.  
  \end{equation*}
  This set will not be measurable in general. 
  Let $\beta_G:=\P^*(M^{G}):=\inf\{\P(M): M \in \FF, M \supset M^{G}\}$ 
  and let $M_1 \supset M_2 \supset \ldots$ be sets in $\FF$ with 
  $M^G\subset M_i$, $i\in\mathbb N$, such that 
  $\lim_{i\to\infty}\P(M_i)=\beta_G$. 
  Define $\tilde M^{G}:=\bigcap_{i=1}^\infty M_i$ and 
  \begin{equation}\label{hatK}
    \hat K:=\bigcap_{n=1}^\infty
            \bigcup_{x \in D}
            \bigl(R(x,{\textstyle\frac 1n})\times \tilde M^{R(x,\frac 1n)}\bigr).
\end{equation}
$\hat K$ is a random set. 
Using Lemma~\ref{cover} and Remark~\ref{coverbem}, we see that $\hat K(\omega)$ 
is closed for all $\omega \in \Omega$. 
Since $\omega\mapsto d\bigl(y,(R(x,\frac1n)\times \tilde M^{R(x,\frac 1n)})(\omega)\bigr)$ 
is measurable for each $x$, $y$ and $n$, the same is true for the map 
$\omega\mapsto d(y,\hat K(\omega))$, so that~$\hat K$ is a closed random set. 
By construction, we have $K(\omega)\subset\hat K(\omega)$ for all $\omega\in\Omega$. 
Indeed, 
\begin{equation*}
  \overline{K}=\bigcap_{n=1}^\infty \bigcup_{x \in D}
  \bigl(R(x,{\textstyle\frac1n})\times M^{R(x,\frac 1n)}\bigr). 
\end{equation*}
It remains to show the minimality property of~$\hat K$.  \par

Let~$S$ be a set as in the statement of the proposition and let $G\in \EE$. 
Then, almost surely, 
\begin{equation*}
  K(\omega)\cap G \subset S(\omega) \cap G  
\end{equation*}
and therefore $S(\omega) \cap G \neq \emptyset$ for almost every $\omega\in M^{G}$. 
Since~$E$ is Polish, the projection theorem (see, e.g., Dellacherie and 
Meyer~\cite[III.44]{DM78}) implies that the set 
$\{\omega\in\Omega: \, S(\omega)\cap G \neq \emptyset\}$ is in $\FF^\mathrm u$ and 
therefore $S(\omega) \cap G \neq \emptyset$ for almost all $\omega\in\tilde M^{G}$. 
Let $\Omega_0 \in \FF$ be a set of full measure such that $S(\omega)$ is closed 
for all $\omega \in \Omega_0$ and  $S(\omega)\cap G\neq\emptyset$ for all 
$\omega\in \tilde M^{G}\cap \Omega_0$ and all $G=R(x,\frac1n)$, $x\in D$, $n\in\N$. 
Let $\omega \in \Omega_0$ and $y \in \hat K(\omega)$. 
By definition of~$\hat K$ this means that for every $n \in \N$ there exists some 
$x \in D$ such that $y \in R(x,\frac1n)$ and $\omega \in \tilde M^{R(x,\frac1n)}$, 
so $d(y,S(\omega))\le 2/n$. 
Since~$S(\omega)$ is closed this implies $y \in S(\omega)$, so the proof of the 
proposition is complete. 
\end{proof}

\begin{bem}
The assertion of Proposition~\ref{singleset} is wrong without the word 
\emph{closed} (at both places where it appears). 
As an example consider the deterministic case in which~$\Omega$ is a 
singleton and let~$K$ be a set which is not in~$\EE$. 
There exists no smallest measurable subset of~$E$ which contains~$K$.
\end{bem}

\begin{prop}\label{lemma}
  Assume that $K_\alpha$, $\alpha\in I$, is a family of sets in 
  $\EE\otimes\mathscr F^\mathrm u$. 
  Then there exists a closed random set~$A$ 
  such that for every $\alpha\in I$ we have $K_\alpha(\omega)\subset A(\omega)$ 
  almost surely and~$A$ is the minimal set with this property: 
  $A(\omega) \subset S(\omega)$ almost surely 
  for every $S \subset E \times \Omega$ for which $S(\omega)$ is almost 
  surely closed and for which $K_\alpha(\omega)\subset S(\omega)$ almost 
  surely for every $\alpha\in I$.  \par

  Furthermore, there exists a countable subset $I_0 \subset I$ such that 
  \begin{equation}\label{Verein}
     A(\omega)=\overline{\bigcup_{\alpha \in I_0}K_\alpha(\omega)}
  \end{equation} 
  up to a set of measure zero.
\end{prop}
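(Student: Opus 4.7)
The plan is to reduce the uncountable family $\{K_\alpha\}_{\alpha\in I}$ to a countable subfamily via an essential-union argument and to take for $A$ the sectionwise closure of the associated countable union; this will automatically be a closed random set because it is given by an explicit formula of the same shape as \eqref{hatK}.

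For every $x\in D$ and $n\in\N$ set $G_{x,n}:=R(x,1/n)$ and
\[
  M^{x,n}_\alpha:=\{\omega\in\Omega:\,K_\alpha(\omega)\cap G_{x,n}\neq\emptyset\},
\]
which belongs to $\FF^{\mathrm u}$ by the projection theorem, applied exactly as in the proof of Proposition~\ref{singleset}. The standard essential-supremum construction in probability theory---take a sequence of countable subfamilies whose unions approach the supremum of the $\P$-measures and form the diagonal union---produces a countable $I_0^{x,n}\subset I$ such that for every $\alpha\in I$ the set
\[
  M^{x,n}_\alpha\setminus\bigcup_{\beta\in I_0^{x,n}}M^{x,n}_\beta
\]
is $\P$-null. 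Let $I_0:=\bigcup_{x\in D,\,n\in\N}I_0^{x,n}$, a countable union of countable sets, and define
\[
  A:=\bigcap_{n=1}^\infty\bigcup_{x\in D}\Bigl(R(x,1/n)\times\bigcup_{\beta\in I_0^{x,n}}M^{x,n}_\beta\Bigr).
\]
Remark~\ref{coverbem} guarantees that every $\omega$-section of $A$ is closed; the measurability of the building blocks makes $A$ a closed random set, and the computation parallel to the identity $\overline K=\bigcap_n\bigcup_x(R(x,1/n)\times M^{R(x,1/n)})$ from the previous proof identifies $A(\omega)$ with $\overline{\bigcup_{\beta\in I_0}K_\beta(\omega)}$ almost surely, which already establishes \eqref{Verein}.

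It remains to show $K_\alpha(\omega)\subset A(\omega)$ almost surely for every $\alpha\in I$ and the minimality statement. Fix $\alpha\in I$ and let $N_\alpha$ be the union over $(x,n)$ of the exceptional null sets above, which is countable and therefore $\P$-null. For $\omega\notin N_\alpha$ and $y\in K_\alpha(\omega)$, Remark~\ref{coverbem2} provides for every $n$ some $x_n\in D$ with $y\in R(x_n,1/n)$; then $\omega\in M^{x_n,n}_\alpha$, so by the essential-union property $\omega\in M^{x_n,n}_\beta$ for some $\beta\in I_0^{x_n,n}\subset I_0$, and any $z_n\in K_\beta(\omega)\cap R(x_n,1/n)$ satisfies $d(y,z_n)<2/n$, whence $y\in A(\omega)$. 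For minimality, if $S(\omega)$ is almost surely closed with $K_\alpha(\omega)\subset S(\omega)$ a.s.\ for every $\alpha\in I$, then outside a countable union of null sets we have $K_\beta(\omega)\subset S(\omega)$ for every $\beta\in I_0$ simultaneously, and closedness of $S(\omega)$ forces $A(\omega)\subset S(\omega)$. The essential-union step is the sole genuine obstacle: it is the only place where the uncountability of $I$ must be defeated, and its success depends crucially on the universal measurability of the sets $M^{x,n}_\alpha$ supplied by the projection theorem.
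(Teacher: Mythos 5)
Your argument is correct and follows the paper's proof in all essentials: the same cover $R(x,1/n)$, the same hitting sets $M_\alpha^{R(x,1/n)}$ made universally measurable by the projection theorem, an essential-union reduction to a countable subfamily, the same product-form definition of $A$, and the identification $A(\omega)=\overline{\bigcup_{\beta\in I_0}K_\beta(\omega)}$ yielding both \eqref{Verein} and minimality. The only (immaterial) differences are that the paper first closes the family $\{K_\alpha\}$ under finite unions so that it can maximize $\P(M_\alpha^G)$ over single indices instead of over countable unions, and that it explicitly replaces the $\FF^{\mathrm u}$-sets by $\FF$-sets up to null sets --- a one-line step you should add so that $A$ is literally $\EE\otimes\FF$-measurable, as the paper's definition of a random set requires.
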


\begin{proof} We can and will assume without loss of generality that 
  the family $K_\alpha$, $\alpha\in I$, is closed under finite unions. 
  For an arbitrary set $G \in \EE$ and $\alpha\in I$ we define
  \begin{equation*} 
    M_\alpha^{G}:=\{\omega\in\Omega:K_\alpha(\omega)\cap G \neq \emptyset\}.
  \end{equation*}
  Since~$E$ is Polish, $M_\alpha^{G}\in\mathscr F^u$ by the projection 
  theorem~\cite[III.44]{DM78}. 
  We can extend the probability measure~$\P$ to $\mathscr F^u$ in a 
  unique way (we will use the same notation for the extension). 
  Note that on the set $M_\alpha^{G}$ each~$S$ as in the lemma necessarily 
  satisfies $S(\omega) \cap G \neq \emptyset$ almost surely and therefore 
  the same must be true for~$A$ (which we yet need to define). 
  Let $\beta_{\alpha,G}:=\P\bigl(M_\alpha^{G}\bigr)$. 
  Then, there exists a sequence $\alpha_j=\alpha_j(G),j=1,2,\ldots$ in~$I$ 
  such that 
  $\P\bigl(M_{\alpha_j}^{G}\bigr)\nearrow \beta_G:=\sup_{\alpha\in I}\beta_{\alpha,G}$. 
  Define 
  \begin{equation*}
    M^{G}:=\bigcup_{j=1}^\infty  M_{\alpha_j}^{G}. 
  \end{equation*}
The set~$M^{G}$ depends on the choice of the sequence $\bigl(\alpha_j\bigr)$, 
but different choices give sets which only differ by a set of measure~$0$. 
Changing the sets $M_\alpha^{G}$ on a set of $\P$-measure 0, we can and will 
in fact assume that all these sets (and also the sets~$M^{G}$)  are even 
in~$\mathscr F$.
Note that for each $\alpha\in I$ we have $M_\alpha^{G}\subset M^{G}$ up 
to a set of measure~$0$. 
Recall that~$D$ is a countable and dense subset of~$E$. 
Defining
\begin{equation*}
  C_r:=\bigcup_{x \in D}\bigl(\overline{B(x,r)}\times M^{B(x,r)}\bigr),\qquad 
  C:=\bigcap_{n\in\N} C_{1/n},
\end{equation*}
one may hope to be able to show that $A:=C$ has the required properties. 
While the measurability property clearly holds it is not clear 
that~$A(\omega)$ is closed. 
One might therefore take the closure of the right hand side in the 
definition of~$C_r$, but then measurability of~$A$ is not clear. 
We will therefore change the definition of~$C_r$ in such a way that 
it becomes a closed random set for each $r>0$. 
Then~$C$ will be a closed random set as well.  \par

Define $R(x,r)$ as in Lemma~\ref{coverbem} and put 
\begin{equation*}
  A_r:=\bigcup_{x \in D}\bigl(R(x,r) \times M^{R(x,r)}\bigr),\qquad 
  A:=\bigcap_{n\in\N} A_{1/n}.
\end{equation*}
Clearly, $A_r \in \EE \otimes\mathscr F$ and $A_r(\omega)$ is closed 
for each $\omega \in \Omega$ and each $r>0$. 
As in the proof of Proposition~\ref{singleset} it follows that~$A_r$ 
is even a closed random set and hence the same is true for~$A$. 
  
For~$S(\omega)$ as in the proposition and $\alpha \in I$ we have to show that 
$K_\alpha(\omega)\subset A(\omega)\subset S(\omega)$ for $\P$-almost all 
$\omega\in \Omega$.  \par

Fix $\alpha\in I$.  
For each $r>0$ we have 
\begin{equation*}
  K_\alpha(\omega) 
  \subset\Bigl(\bigcup_{x \in D}\bigl(R(x,r)\times M_\alpha^{R(x,r)}\bigr)\Bigr)(\omega)
  \subset\Bigl(\bigcup_{x \in D}\bigl(R(x,r)\times M^{R(x,r)}\bigr)\Bigr)(\omega)=A_r(\omega)
\end{equation*}
up to a set of measure~$0$ (which may depend on~$\alpha$). 
Hence, up to a set of measure~$0$, 
\begin{equation}\label{oben}
  K_\alpha(\omega)\subset A(\omega).  
\end{equation}
To finish the proof it suffices to show that 
\begin{equation}\label{Gleichheit}
  A(\omega)=
  \overline{\bigcup_{x\in D,n \in \N,j \in \N}K_{\alpha_j(R(x,1/n))}(\omega)} 
\end{equation}
up to a null set (observe that the right hand side is in $S(\omega)$ almost surely). 
Note that the inclusion ``$\supset$'' follows from~\eqref{oben} and the fact 
that~$A(\omega)$ is closed, so it remains to show the inclusion ``$\subset$''.  \par

For every $n \in \N$ and almost every $\omega \in \Omega$ we have 
\begin{equation*}
  A(\omega)\subset
  \bigcup_{x \in D,M^{R(x,1/n)}\ni \omega} R(x,{\textstyle\frac1n})
  \subset \bigcup_{x\in D,m \in \N,j \in \N}K^{2/n}_{\alpha_j(R(x,1/m))}(\omega),  
\end{equation*}
where the upper index~$2/n$ denotes the closed~$2/n$-neighbourhood of a set. 
Taking intersections over all $n\in\N$, \eqref{Gleichheit} follows and the 
proof of the proposition is complete. 
\end{proof}

\begin{bem}
The assertion of Proposition~\ref{lemma} becomes wrong if the word 
\emph{closed} is deleted. 
As an example take~$\Omega$ a singleton, $E=\R$, $I$ a non-measurable 
subset of~$\R$, and $K_\alpha=\{\alpha\}$ for $\alpha \in I$. 
There is no minimal measurable subset of~$\R$ which contains~$I$. 
\end{bem}

Our next goal is to clarify whether forward and strict invariance, 
respectively, of a set~$K$ are inherited by~$\hat K$ given by 
(the proof of) Proposition~\ref{singleset}.

\begin{lemma}\label{hat}
  \begin{enumerate}[(i)]
  \item If $K\subset E\times\Omega$ is forward invariant then 
    so is~$\hat K$. 
  \item If $K\subset E\times\Omega$ is strictly invariant, and 
    if~$\hat K(\omega)$ is compact for almost every $\omega\in\Omega$, 
    then~$\hat K$ is strictly invariant.
  \end{enumerate}
\end{lemma}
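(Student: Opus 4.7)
The plan is to exploit the minimality property of $\hat K$ from Proposition~\ref{singleset}: for each fixed $t\geq 0$ I construct an explicit closed (respectively compact) random set that dominates $K$, and then the minimality of $\hat K$ forces the desired inclusion automatically.

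For part~(i), fix $t\geq 0$ and set
\[
  S_t(\omega)\;:=\;\bigl\{x\in E:\varphi(t,\omega)x\in\hat K(\vartheta_t\omega)\bigr\}
  \;=\;\varphi(t,\omega)^{-1}\bigl(\hat K(\vartheta_t\omega)\bigr).
\]
Continuity of $\varphi(t,\omega)$ makes every section $S_t(\omega)$ closed, while measurability of $S_t$ as a subset of $E\times\Omega$ follows from pulling the random set $\hat K$ back through the measurable map $(x,\omega)\mapsto(\varphi(t,\omega)x,\vartheta_t\omega)$; hence $S_t$ is a closed random set. Forward invariance of $K$ gives $K(\omega)\subset S_t(\omega)$ almost surely, so the minimality property of $\hat K$ forces $\hat K(\omega)\subset S_t(\omega)$ a.s., which is precisely $\varphi(t,\omega)\hat K(\omega)\subset\hat K(\vartheta_t\omega)$ a.s.

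For part~(ii), part~(i) already supplies the inclusion $\varphi(t,\omega)\hat K(\omega)\subset\hat K(\vartheta_t\omega)$; only the reverse inclusion is missing. Fix $t\geq 0$ and set
\[
  T_t(\omega)\;:=\;\varphi(t,\vartheta_{-t}\omega)\bigl(\hat K(\vartheta_{-t}\omega)\bigr).
\]
The compactness hypothesis on $\hat K$ together with continuity of $\varphi(t,\vartheta_{-t}\omega)$ makes each $T_t(\omega)$ compact, hence closed. Strict invariance of $K$ applied at $\vartheta_{-t}\omega$ gives $K(\omega)=\varphi(t,\vartheta_{-t}\omega)K(\vartheta_{-t}\omega)\subset T_t(\omega)$ a.s.; the minimality of $\hat K$ then yields $\hat K(\omega)\subset T_t(\omega)$ a.s., i.e.\ (after replacing $\omega$ by $\vartheta_t\omega$ and using $\vartheta_t$-invariance of $\P$) $\hat K(\vartheta_t\omega)\subset\varphi(t,\omega)\hat K(\omega)$ a.s., and strict invariance of $\hat K$ follows.

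The main obstacle is showing that $T_t$ really is a random set in the sense of the paper: the forward image of a measurable set under a measurable map need not lie in $\EE\otimes\FF$ (it is in general only analytic). The compactness hypothesis on $\hat K(\omega)$ is exactly what rescues the argument: choosing a Castaing representation $\hat K(\vartheta_{-t}\omega)=\overline{\{\xi_i(\omega):i\in\mathbb N\}}$ by measurable selections $\xi_i$, continuity together with compactness give
\[
  T_t(\omega)\;=\;\overline{\bigl\{\varphi(t,\vartheta_{-t}\omega)\xi_i(\omega):i\in\mathbb N\bigr\}},
\]
a countable union of graphs of measurable maps, hence measurable in $E\times\Omega$; the distance function $\omega\mapsto d(y,T_t(\omega))=\inf_i d(y,\varphi(t,\vartheta_{-t}\omega)\xi_i(\omega))$ is then automatically measurable, so $T_t$ is a compact random set as required.
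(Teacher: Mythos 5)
Your argument is correct, and its skeleton is the same as the paper's: both parts are reduced to the minimality property of $\hat K$ from Proposition~\ref{singleset}, applied in~(i) to the preimage $\varphi(t,\omega)^{-1}\bigl(\hat K(\vartheta_t\omega)\bigr)$ (here your proof is essentially verbatim the paper's) and in~(ii) to the forward image of $\hat K$, read off at the shifted fibre. The one place where you genuinely diverge is the verification that the forward image $T_t(\omega)=\varphi(t,\vartheta_{-t}\omega)\hat K(\vartheta_{-t}\omega)$ is a closed random set, which is indeed the only delicate step in~(ii). The paper exploits the explicit representation~\eqref{hatK} of $\hat K$ as a countable intersection of countable unions of rectangles $R(x,\frac1n)\times \tilde M^{R(x,\frac 1n)}$ and reduces everything to images $\varphi(t,\omega)C$ of deterministic compact sets $C$, whose distance functions are infima over a countable dense subset of $C$. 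You instead take a Castaing representation $\hat K(\vartheta_{-t}\omega)=\overline{\{\xi_i(\omega):i\in\N\}}$ and push the countably many selections forward. Both routes work; yours is shorter and independent of the particular construction of $\hat K$, at the price of invoking a measurable selection theorem (Kuratowski--Ryll-Nardzewski), which is available here because $\hat K$ is a closed random set, so $\omega\mapsto d\bigl(x,\hat K(\vartheta_{-t}\omega)\bigr)$ is measurable for every $x\in E$; one only needs to restrict to the measurable event $\{\omega:\hat K(\vartheta_{-t}\omega)\neq\emptyset\}$ and put $T_t(\omega)=\emptyset$ elsewhere. Two small points worth making explicit: compactness of $\hat K(\omega)$ is only assumed almost surely, so, as in the paper, one should first modify $\hat K$ on a null set to obtain a compact random set (otherwise $T_t(\omega)$ need not be closed on that null set); and the identity $f(\overline A)=\overline{f(A)}$ underlying your pushed-forward Castaing representation is precisely where this compactness is used.
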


\begin{proof}
Let~$K$ be forward invariant, so for fixed $t \ge 0$ we have 
$\varphi(t,\omega)K(\omega)\subset K(\vartheta_t\omega)$. 
Here and in the following equalities and inclusions are meant up to 
sets of measure 0. 
Hence 
\begin{equation*}
  K(\omega)
  \subset(\varphi(t,\omega))^{-1}(K(\vartheta_t\omega))
  \subset (\varphi(t,\omega))^{-1}(\hat K(\vartheta_t\omega)). 
\end{equation*}
The right hand side is a random subset with closed $\omega$-sections 
since $\varphi(t,\omega)$ is continuous and therefore contains~$\hat K$, 
so~$\hat K$ is invariant.  \par

Next suppose that~$K$ is strictly invariant. 
Changing~$\hat K$ on a set of measure~$0$ if necessary we can and 
will assume that~$\hat K$ is a compact random set. 
For $t\ge 0$ fixed we have 
\begin{equation*}
  \varphi(t,\omega)\hat K(\omega)
  \supset \varphi(t,\omega)K(\omega)
  \supset K(\vartheta_t\omega). 
\end{equation*}
If the left hand side is a random set with closed $\omega$-sections, 
then it must contain $\hat K(\vartheta_t\omega)$ and strict invariance 
of~$\hat K$ follows. 
Using the representation~\eqref{hatK} of the set~$\hat K$, we see that 
if $\varphi(t,\cdot)(S)$, where $S=C \times F$ with~$C$ a deterministic 
compact subset of~$E$ and $F\in\FF$, is a random set, then the same is 
true for $\varphi(t,\cdot)(\hat K)$. 
Observe that the map $y \mapsto d(y,\varphi(t,\omega)(S(\omega)))$ is 
measurable for each $y \in E$ since 
$d(y,\varphi(t,\omega)(S(\omega)))=\infty$ for $\omega\notin F$, 
$d(y,\varphi(t,\omega)(S(\omega)))=d(y,\varphi(t,\omega)C)$ for 
$\omega \in F$ and $d(y,\varphi(t,\omega)C)<r$ iff 
$d(y,\varphi(t,\omega)x_i)<r$ for all $i$ where $(x_i)$ is a 
countable dense set in $C$ and $r>0$. 
This shows that $\varphi(t,\omega)\hat K(\omega)$ is a random set. 
To see that it has closed $\omega$-sections we use the fact that 
$\hat K(\omega)$ is compact and $\varphi(t,\omega)$ is continuous. 
This completes the proof of the lemma. 
\end{proof}

\textbf{Proof} [of Theorem~\ref{propo}]\textbf{.} 
We apply Proposition~\ref{lemma} to $K_B:=\hat\Omega_B$, $B\in\B$, 
where $\hat\Omega_B$ is constructed from $\Omega_B$ as in 
Proposition~\ref{singleset}, and obtain a smallest closed random 
set~$A$ containing~$\hat\Omega_B$ almost surely for every $B\in\B$. 
Clearly~$A$ is also the minimal closed random set~$A$ 
containing~$\Omega_B$ almost surely for each $B\in\B$. 
We claim that (a slight modification of) $A$ is the minimal pullback 
$\B$-attractor. 
By assumption, the set $\K(\omega)$ from the assertion of 
Theorem~\ref{propo} contains $\Omega_B(\omega)$  and, by 
Proposition~\ref{singleset}, also $\hat\Omega_B(\omega)$ almost 
surely for every $B\in\B$. 
By minimality, we have $A(\omega)\subset \K(\omega)$ almost surely. 
Since~$\K(\omega)$ is almost surely compact so is $A(\omega)$. 
Changing~$A$ on a set~$N$ of measure zero containing those~$\omega$ for 
which~$A(\omega)$ is not compact (e.g.\ by redefining $A(\omega)=\{e\}$ 
for $\omega\in N$ with some fixed $e\in E$) we can assume that~$A$ is 
a compact random set. 
Proposition~\ref{lemma} further implies that~$A$ can be represented as 
the closure of a countable union of sets $\hat\Omega_B$, $B\in\B$, 
almost surely. 
Since~$\Omega_B$ is strictly invariant by Lemma~\ref{tq2} and 
$\hat\Omega_B(\omega)$ is almost surely compact, $\hat\Omega_B$ 
is strictly invariant by Lemma~\ref{hat}, and so is~$A$ and the proof 
of Theorem~\ref{propo} is complete.  \qed

\begin{bem}
It is natural to ask if Theorem~\ref{propo} remains true if the set~$\K$ 
is not required to be $\EE \otimes \FF$-measurable (but all other 
assumptions hold). 
This is not the case in general. 
As an example, take $\Omega=[0,1]$ equipped with Lebesgue measure~$\P$ 
on the Borel sets, take $\vartheta=\mathrm{id}$, $E=\R$, and 
$\varphi=\mathrm{id}$ (with discrete or continuous time). 
Let $f:\Omega\to\R$ be a function whose graph $B\subset E\times\Omega$ 
is non-measurable and let $\B:=\{B\}$.  
Then $\Omega_B(\omega)=B(\omega)=\{f(\omega)\}$ for all~$\omega$, 
hence $\Omega_B=B$ and the assumptions of Theorem~\ref{propo} hold 
with~$\K=B$ except that~$\K$ is not a random set. 
\emph{If} a pullback-$\B$-attractor exists then it cannot possibly be 
contained in $\K$, so~(iii) of Theorem~\ref{propo} does not hold. 
One might hope that Theorem~\ref{propo} remains true if one replaces~$\K$ 
by~$\hat \K$ in part~(iii). 
However, also this fails to hold true since $\K\subset E\times\Omega$ 
with~$\K(\omega)$ compact for almost all~$\omega$ does not guarantee 
that~$\hat\K(\omega)$ is almost surely compact and if we 
choose~$B=\K$ then no $\B$-pullback attractor exists.  \par

If we impose additional measurability assumptions on~$\B$ then 
measurability of~$\K$ is not required for Theorem~\ref{propo} to hold. 
Indeed, if~$\Omega_B$ is a random set for every $B\in\B$ (or even just 
$\Omega_B\in \EE\otimes\FF^\mathrm u$), then define~$A$ as in 
Proposition~\ref{lemma}. 
Due to formula~\eqref{Verein} we see that $A(\omega)\subset\K(\omega)$ 
almost surely, so all assumptions of Theorem~\ref{propo} hold with~$\K$ 
replaced by~$A$. 
\end{bem}

\section{Existence of a minimal weak attractor for $\mathcal B$} 

Now we discuss the corresponding question for weak attractors. 
We continue to use covers $R(x,r)$, $x\in D$, as in the previous 
section.  \par

\begin{thm}\label{weakthm} Let~$\B$ be an arbitrary family of sets 
$B\subset E\times\Omega$ and let $(\varphi,\vartheta)$ be an RDS 
on~$E$. 
\begin{enumerate}[(i)]
\item
  Assume that there exists a random set $\K\subset E\times\Omega$ such 
  that $\K(\omega)$ is $\P$-a.s.\ compact and $\K$ attracts $\B$, i.e.\
  \begin{equation*}
    \lim_{t\to\infty}d\bigl(\varphi(t,\vartheta_{-t}\omega)B(\vartheta_{-t}\omega), 
    \K(\omega)\bigr)=0\quad\mbox{in probability},
  \end{equation*}
  for every $B\in\B$. 
  Then  $(\varphi,\vartheta)$ admits at least one weak attractor for~$\B$. 
\item If the RDS~$(\varphi,\vartheta)$ has a weak attractor for~$\B$, 
  then it has a \emph{minimal} weak $\B$-attractor~$A$. 
  In addition, there is a countable sub-family~$\B_1\subset\B$ such 
  that~$A$ is also the minimal weak $\B_1$-attractor. 
\item Under the assumptions of~(i) the minimal weak $\B$-attractor~$A$ 
  satisfies $A(\omega)\subset\K(\omega)$ almost surely. 
\end{enumerate}
\end{thm}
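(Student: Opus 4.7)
My plan is to follow the structure of the proof of Theorem~\ref{propo}, with almost sure convergence replaced by convergence in probability and with systematic use of subsequence extraction. By Remark~\ref{rem3}(i) I may work with the equivalent pullback-in-probability formulation
\begin{equation*}
  d\bigl(\varphi(t,\vartheta_{-t}\omega)B(\vartheta_{-t}\omega),A(\omega)\bigr)\longrightarrow 0\quad\mbox{in probability}.
\end{equation*}
For each $B\in\B$, a diagonal argument yields a sequence $t_n^B\to\infty$ along which $d(\varphi(t_n^B,\vartheta_{-t_n^B}\omega)B(\vartheta_{-t_n^B}\omega),\K(\omega))\to 0$ almost surely, and along such a sequence I define the \emph{weak $\Omega$-limit set}
\begin{equation*}
  \Omega_B^w(\omega):=\bigcap_{N\geq 1}\overline{\bigcup_{n\geq N}\varphi(t_n^B,\vartheta_{-t_n^B}\omega)B(\vartheta_{-t_n^B}\omega)},
\end{equation*}
which by compactness of $\K(\omega)$ satisfies $\Omega_B^w(\omega)\subset\K(\omega)$ almost surely.

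Applying Proposition~\ref{lemma} to the family $\{\hat\Omega_B^w:B\in\B\}$ of closed random hulls (Proposition~\ref{singleset}) yields a minimal closed random set $A$ that contains every $\hat\Omega_B^w$ almost surely, together with a countable subfamily $\B_1\subset\B$ such that $A(\omega)=\overline{\bigcup_{B\in\B_1}\hat\Omega_B^w(\omega)}$ up to a null set. Minimality combined with $\Omega_B^w\subset\K$ forces $A\subset\K$ almost surely, so after modification on a null set $A$ becomes a compact random set, which gives~(iii). Weak attraction of $\B$ by $A$ I would verify by contradiction: if attraction fails for some $B$ along $s_k\to\infty$, then convergence in probability to $\K$ lets me pass to an almost surely convergent sub-subsequence, compactness of $\K$ extracts cluster points, and these cluster points are recognised as elements of a weak $\Omega$-limit set for $B$, hence (after verifying that such enlarged limit sets cannot escape $A$) lie inside $A$, contradicting the failure of attraction.

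The main obstacle is strict invariance of $A$, since the direct analogue of Lemma~\ref{tq2} for an individual $\Omega_B^w$ breaks down: the shifted sequence $(t_n^B-s)$ need not inherit the almost sure convergence. I would resolve this by exploiting the countability of $\B_1$ to perform a second diagonal extraction, producing a single sequence $(t_n)$ along which almost sure convergence to $\K(\omega)$ holds simultaneously for every $B\in\B_1$; by the minimality statement, rebuilding the $\Omega_B^w$ along this common sequence does not alter $A$ up to a null set. Given $y\in A(\vartheta_s\omega)$, approximate $y$ by $\varphi(t_{n_k},\vartheta_{-t_{n_k}}\vartheta_s\omega)b_k$ with $b_k\in B_k(\vartheta_{s-t_{n_k}}\omega)$, $B_k\in\B_1$, rewrite via the cocycle identity $\varphi(t_n,\vartheta_{-t_n}\vartheta_s\omega)=\varphi(s,\omega)\scirc\varphi(t_n-s,\vartheta_{-(t_n-s)}\omega)$, perform one more sub-subsequence extraction to restore almost sure convergence along $(t_{n_k}-s)$, and use compactness of $\K(\omega)$ to obtain a cluster point $z\in A(\omega)$ with $\varphi(s,\omega)z=y$; combined with forward invariance inherited via Lemma~\ref{hat}, this yields strict invariance of $A$. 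The delicate point, and the essential departure from the pullback proof, is synchronising the various subsequence choices across the uncountable family $\B$ while keeping the almost sure exceptional sets under control.
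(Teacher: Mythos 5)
Your overall architecture (build a per-$B$ kernel set, take its closed random hull, apply Proposition~\ref{lemma}, transfer invariance) mirrors the pullback proof, but the kernel you choose --- a ``weak $\Omega$-limit set'' $\Omega_B^w$ along a subsequence $(t_n^B)$ extracted from convergence in probability to $\K$ --- is the wrong object, and this creates two genuine gaps. First, minimality fails: the sequence $(t_n^B)$ is constrained only by almost sure convergence to $\K$, and $\K$ may be very large (e.g.\ $\K=E\times\Omega$ when $E$ is compact, in which case \emph{every} sequence qualifies). The limit set along a badly chosen sequence can be strictly larger than the minimal weak attractor; the circle example in the paper shows exactly this phenomenon, since there the full-time $\Omega$-limit set of a point is all of $S^1$ while the minimal weak point attractor is the singleton $\{S(\omega)\}$. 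A weak attractor need not contain $\Omega_B$, so building $A$ as the minimal closed random set containing the $\hat\Omega_B^w$ does not produce the minimal weak attractor. Second, your attraction argument is circular: if attraction of $B$ by $A$ fails along $s_k\to\infty$, the cluster points you extract lie in the weak limit set along a sub-subsequence of $(s_k)$, which is a \emph{different} sequence from $(t_n^B)$; nothing in your construction forces these points into $A$. The parenthetical ``after verifying that such enlarged limit sets cannot escape $A$'' is precisely the missing step, and for the $A$ you have defined it is not true in general. The same unjustified identification (``rebuilding the $\Omega_B^w$ along this common sequence does not alter $A$'') undermines the strict-invariance argument.

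The paper's proof avoids limit sets altogether in the weak case. For each $B$ and each basic closed set $G=R(x,\frac1n)$ it takes an (essentially) largest measurable set $V\in\V_B^G$ on which the pullback orbit of $B$ eventually misses $G$ in outer probability, sets $M_B^G:=\Omega\setminus V$, and assembles $K_B:=\bigcap_n\bigcup_{x\in D}\bigl(R(x,\frac1n)\times M_B^{R(x,\frac1n)}\bigr)$. One then shows directly that $K_B\subset\K$ a.s., that $K_B$ attracts $B$ in probability (via a finite subcover of a compact set containing $\K(\omega)$ with high probability), and that $K_B$ is the \emph{minimal} closed random set attracting $B$ in probability. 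This minimality --- as an attracting set, not as a limit set --- is then the lever for strict invariance: both $\varphi(t,\omega)K_B(\omega)$ and $\bigl(\varphi(t,\omega)\bigr)^{-1}K_B(\vartheta_t\omega)$ are shown to be attracting in the appropriate fibres, and minimality forces the two inclusions. Only after that is Proposition~\ref{lemma} applied to the family $\{K_B\}_{B\in\B}$. If you want to salvage your approach, you would have to replace $\Omega_B^w$ by a set with a built-in minimality property of this kind; subsequence extraction alone cannot supply it.
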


\begin{proof} 
For~$G\in \EE$ and $B\in \B$ define 
\begin{equation*}
  \V_B^G:=\Big\{V\in{\FF}:\lim_{t\to \infty}
  \P^* \bigl(
  V\cap\{\omega:\varphi(t,\vartheta_{-t}\omega)(B(\vartheta_{-t}\omega))
   \cap G
   \neq\emptyset\}\bigr)=0\Big\}
\end{equation*}
and put $\beta:=\sup\{\P(V): V \in \V_B^G\}$. 
Choose an increasing sequence $V_i\in\V_B^G$ such that 
$\lim_{i\to\infty}\P(V_i)=\beta$. 
Then $V:=\bigcup_{i=1}^\infty V_i\in\V_B^G$ (since outer measures 
are monotone and subadditive). 
Put $M_B^{G}:=\Omega\setminus V$ and define 
\begin{equation*}
  K_B:=\bigcap_{n=1}^\infty
       \bigcup_{x\in D}
       \Bigl(R(x,{\textstyle\frac1n})\times M_B^{R(x,\frac 1n)}\Bigr).  
\end{equation*}

Fix~$B\in\B$. 
Then $K_B$ is a closed random set. 
We first show that~$K_B$ is a minimal weak $\{B\}$-attractor and that~$K_B$ 
is contained in $\K$. 

\emph{Step 1:} $K_B(\omega)\subseteq\K(\omega)$ almost surely 

Let $B\in\B$, $G\in\EE$ and denote 
$S_t:=\{\omega:\,\varphi(t,\vartheta_{-t}\omega)B(\vartheta_{-t}\omega)\cap G\neq\emptyset\}$ 
for $t \ge 0$ and 
$\M_\delta :=\{\omega:\,\K^\delta (\omega)\cap G \neq \emptyset\}$ for $\delta>0$. 
Then $\lim_{t\to\infty}\P^*(S_t\setminus\M_\delta)=0$ by assumption, 
whence $\M_\delta^c\in\V_B^G$ and therefore $M_B^G\subseteq\M_\delta $, 
so $\K^\delta\cap G\neq\emptyset$ on~$M_B^G$ almost surely for each 
$\delta\in\mathbb Q\cap(0,\infty)$. 
This implies 
\begin{equation*}
  \K\cap G\neq\emptyset\quad{on}\ M_B^G\ \mbox{a.s.}, 
\end{equation*}
provided~$G$ is closed (using the fact that~$\K(\omega)$ is almost 
surely compact). 
In particular, we have 
\begin{equation*}
  R(x,1/n)\subseteq\K^{3/n}(\omega)\ \mbox{on}\ M_B^{R(x,{\textstyle\frac1n})}\ \mbox{a.s.} 
\end{equation*}
and therefore
\begin{equation*}
  K_B(\omega)\subseteq\bigcap_{n=1}^\infty
  \bigcup_{x\in D}
  \Bigl(\K^{3/n}(\omega)\times M_B^{R(x,{\textstyle\frac1n})}\Bigr)(\omega)
    \subseteq \bigcap_{n=1}^\infty \K^{3/n}(\omega)=\K(\omega)\quad{a.s.}, 
\end{equation*}
proving Step 1.  \par\bigskip

Our next goal is to show that~$K_B$ attracts~$B$ in probability. 
Generally, we say (in this proof) that a random set $S\subset E\times\Omega$ 
\emph{attracts}~$B$ if 
\begin{equation*}
  \lim_{t\to\infty}d\bigl(\varphi(t,\vartheta_{-t}\omega)B(\vartheta_{-t}\omega), 
  S(\omega)\bigr)=0\quad\mbox{in probability}.
\end{equation*}

\emph{Step 2:} The following properties are easy to verify: 
\begin{itemize}
\item[a)] If $S_1$ and $S_2$ attract $B$, then so does $S_1 \cap S_2$.  
\item[b)] If $S_n$, $n\in\N$, are random sets such that $S_n(\omega)$ 
  is compact for almost all $\omega$ and such that~$S_n$ attract $B$, 
  $n\in\mathbb N$, then so does $\bigcap_{n=1}^\infty S_n$. 
\end{itemize}

\emph{Step 3:} We show that 
$E_n:=\bigcup_{x\in D}\Bigl(R(x,{\textstyle\frac1n})\times M_B^{R(x,\frac1n)}\Bigr)$ 
attracts~$B$ for each $n\in\N$.  \par

Let $\eps>0$ and let $K \subset E$ be compact such that 
$\P(\K(\omega)\subseteq K) \ge 1-\eps$. 
Let $D_0 \subset D$ be a finite set such that 
$K\subset\bigcup_{x\in D_0}\mathring R(x,{\textstyle\frac1n})$ and let 
$\delta:=\inf\{d(y,K):\, y \in \bigcup_{x \in D_0}
  \bigl(\mathring R(x,{\textstyle\frac1n})\bigr)^c\}$.  
Note that $\delta>0$. 
We have 
\begin{align*}
\lefteqn{\P^*\Bigl(
  d\bigl(\varphi(t,\vartheta_{-t}\omega)B(\vartheta_{-t}\omega),E_n(\omega)\bigr)\ge \eps
      \Bigr)}  \\
  &\le\P^*
  \Bigl(d\Bigl(
     \bigcup_{x \in D}\Bigl(\varphi(t,\vartheta_{-t}\omega)B(\vartheta_{-t}\omega)
       \cap R(x,{\textstyle\frac1n})\Bigr), 
     \bigcup_{x \in D_0} \Bigl(R(x,{\textstyle\frac1n})\times M_B^{R(x,\frac 1n)}\Bigr)
  \Bigr)
  \ge\eps\Bigr) \\
  &\le\P^*\bigl(\varphi(t,\vartheta_{-t}\omega)B(\vartheta_{-t}\omega)\nsubset K^\delta
           \bigr)  \\ 
  &\quad+\sum_{x\in D_0}
   \P^*\Bigl(d\Bigl(\varphi(t,\vartheta_{-t}\omega)B(\vartheta_{-t}\omega)\cap R(x,{\textstyle\frac1n})\Bigr),
         \Bigl(R(x,{\textstyle\frac1n})\times M_B^{R(x,\frac 1n)}\Bigr)
        \Bigr)
      \ge\eps\Bigr).
\end{align*}
By definition of $M_B^{R(x,\frac1n)}$ the sum converges to~$0$ as $t\to\infty$. 
Further, by the definition of $K$ and $\delta$, 
\begin{equation*}
  \limsup_{t\to\infty} 
  \P^*\bigl(\varphi(t,\vartheta_{-t}\omega)B(\vartheta_{-t}\omega)\nsubset K^\delta \bigr)<\eps. 
\end{equation*}
Since $\eps>0$ was arbitrary it follows that~$E_n$ attracts~$B$.  \par\bigskip

\emph{Step 4:} Define $S_n:=E_n\cap\K$. 
Then Steps~1, 2 and~3 and the definition of~$K_B$ show 
that~$K_B$ attracts~$B$.  \par\smallskip

Note that the statements in~Step 1 and Step~4 imply that~$K_B$ is a 
minimal closed random weak $B$-attracting set. 
Further we know that~$K_B(\omega)$ is compact for almost all 
$\omega\in\Omega$. 
Changing~$K_B(\omega)$ on a set of measure~$0$ we can ensure that~$K_B$ 
is a compact random set. 
In order to complete the proof that~$K_B$ is a minimal weak $\{B\}$-attractor 
it remains to show strict invariance of~$K_B$.  \par\medskip

\emph{Step 5:} $K_B$ is strictly invariant.  \par

The proof of this step is similar to that of Lemma~\ref{hat}. 
Fix~$t>0$. 
Since~$K_B$ attracts~$B$ in probability, we have 
\begin{equation}\label{erstegl}
  \lim_{s\to\infty}d\bigl(\varphi(t+s,\vartheta_{-s}\omega)B(\vartheta_{-s}\omega),
  K_B(\vartheta_t\omega)\bigr)=0\quad\mbox{in probability} 
\end{equation}
and 
\begin{equation}\label{zweitegl}
  \lim_{s\to\infty}d\bigl(\varphi(s,\vartheta_{-s}\omega)B(\vartheta_{-s}\omega),
  K_B(\omega)\bigr)=0\quad\mbox{in probability}. 
\end{equation}
Using the cocycle property and continuity of~$\varphi$ together with 
compactness of~$K_B$, \eqref{zweitegl} implies 
\begin{equation}\label{drittegl}
  \lim_{s\to\infty}d\bigl(\varphi(t+s,\vartheta_{-s}\omega)B(\vartheta_{-s}\omega),
  \varphi(t,\omega)K_B(\omega)\bigr)=0\quad\mbox{in probability}. 
\end{equation}
Using the fact that~$\varphi(t,\omega)K_B$ is a random set (as in 
the proof of Lemma~\ref{hat}), and using minimality of~$K_B$, we 
see from~\eqref{erstegl} and~\eqref{drittegl} that 
$K_B(\vartheta_t\omega)\subset \varphi(t,\omega)K_B(\omega)$ almost 
surely. 
Since~\eqref{erstegl} implies 
\begin{equation*}
  \lim_{s\to\infty}d\bigl(\varphi(s,\vartheta_{-s}\omega)B(\vartheta_{-s}\omega),
  \bigl(\varphi(t,\omega)\bigr)^{-1}K_B(\vartheta_t\omega)\bigr)=0\quad\mbox{in probability}, 
\end{equation*}
equation~\eqref{zweitegl} and minimality of~$K_B$ imply the converse inclusion.  
\par\bigskip

The rest of the proof of the theorem is identical to that in the 
pullback case: just replace the set~$\hat\Omega_B$ by~$K_B$. 
\end{proof}

\section{Non-existence of a minimal forward attractor}\label{non}

In this section we provide an example of an RDS which has a forward 
attractor, but which fails to have a \emph{smallest} forward attractor. 
Consider a stationary Ornstein-Uhlenbeck process~$Z$, i.e.\ 
a real-valued centered Gaussian process defined on~$\R$ with 
covariance $\E(Z(t)Z(s))=\frac12\exp(-|t-s|)$. 
We define~$Z$ on the canonical space~$C(\R,\R)$ of continuous 
functions from~$\R$ to~$\R$ together with the usual shift and 
equipped with the law of~$Z$. 
Then $Z(t,\omega)=Z(0,\vartheta_t\omega)$. 
Let $g:[0,1]\to[-1,0]$ be continuous, non-decreasing such 
that $g(1)=0$, $g(x)<0$ for all $x\in[0,1)$ and $g(x)=-1$ 
for all $x \in [0,1/2]$, and let $h(t,y)$ be the unique 
solution of the ordinary differential equation 
\begin{align*}
  \dot h(t)=
  \begin{cases}
    g(h(t)) &\qquad\mbox{if}\ h(t)>0  \\
    0 &\qquad\mbox{otherwise}
\end{cases}
\end{align*}
with initial condition $h(0)=y\in[0,1]$.  \par

Next we define, for $x\in\R$, $y\in[0,1]$, and $t\ge0$, 
\begin{align*}
  \varphi(t,\omega)(x,y):=
  \begin{cases}
    (x+Z(t,\omega)-Z(0,\omega),\,h(t,y)) &\mbox{if}\ t\le\tau(y)  \\[.7ex]
    \bigl(\mathrm e^{\tau(y)-t}(x-Z(0,\omega))+Z(t,\omega),0\bigr)
      &\mbox{if}\ t\ge\tau(y),
\end{cases}
\end{align*}
where $\tau(y):=\min\{s\ge 0: h(s,y)=0\}$. 
Note that the motion in the $y$-direction is deterministic and all 
points with $y>0$ move in parallel in the $x$-direction. 
If $y<1$, then after the finite (deterministic) time $\tau(y)$, 
the $y$-component arrives at~$0$ and stays there, while the first 
coordinate is attracted by the process $Z$ exponentially fast.  
It is straightforward to check that~$\varphi$ defines a continuous 
RDS on the Polish space $E:=\R\times[0,1]$. 
If we consider~$E$ equipped with the Euclidean metric then the 
singleton $\{(Z(0,\omega),0)\}$ is a forward attractor for the 
family $\B:=\{C\times\{0\}:C\subset\R\ \mbox{compact}\}$. 
This is no longer true if we change the metric on~$E$ in the 
following way (without changing the topology of $E$): 
\begin{equation*}
  d\bigl((x,y),(\tilde x,\tilde y)\bigr):=
  |\tilde y -y|+|\Gamma(\tilde x)-\Gamma(x)|, 
\end{equation*}
where~$\Gamma$ is strictly increasing, odd, continuous such that 
$\Gamma(x)=\exp\{\exp\{\exp (x)\}\}\}$ for large $x$ (the fact 
that this metric  works can be checked by using the fact that the 
running maximum of a stationary Ornstein Uhlenbeck process up to 
time $t$ is of the order $\sqrt{\log t}$). 
There are, however, many forward $\B$-attractors with respect to 
the metric~$d$, for example 
\begin{align*}
  A_\gamma(\omega):= 
  &\bigl([Z(0,\omega)-\gamma,Z(0,\omega)+\gamma]\times \{0\}\bigr) \\
  &\ \bigcup\Bigl(\bigl(\{Z(0,\omega)-\gamma\}\times [0,1]\bigr) 
  \cup\bigl(\{Z(0,\omega)+\gamma\}\times [0,1]\bigr)\Bigr)
\end{align*} 
for an arbitrary $\gamma>0$ (note that this set is strictly 
invariant!). 
Now if there would be a smallest forward $\B$-attractor $A(\omega)$ 
then~$A(\omega)$ would have to be contained in the intersection 
$A_1(\omega)\bigcap A_2(\omega)$, which is a subset of 
$\R\times\{0\}$. 
It is clear, however, that the set $\{(Z(0,\omega),0)\}$ is the 
only strictly invariant compact subset of $\R \times \{0\}$, 
and we already have noted that this is not a forward attractor. 
This contradicts the assumption that there is a smallest forward 
$\mathcal B$-attractor. 
Consequently, this RDS does not have a smallest $\B$-attractor.  \par

\section{Another example}
We construct an example of an RDS for which a minimal pullback point 
attractor~$\omega\mapsto A(\omega)$ exists which, however, does not 
coincide with $\overline{\bigcup_{x\in E} \Omega_x(\omega)}$ (writing 
$\Omega_x(\omega)$ instead of $\Omega_{\{x\}}(\omega)$ for brevity). 
This shows that Theorem~4 in Crauel and Kloeden~\cite{crauel-kloeden15} 
is not entirely correct. 
In the following example, $A(\omega)$ consists of a single point while 
$\bigcup_{x \in E} \Omega_x(\omega)$ coincides with the whole 
space~$E$ almost surely. 
Even though we have $\Omega_x(\omega)\subset A(\omega)$ for almost 
all $\omega\in\Omega$ (\cite[Theorem~3.4]{crauel01} or 
Remark~\ref{rem3}\,(iii)), the (uncountable) union of all 
$\Omega_x(\omega)$ will turn out to be considerably larger 
than~$A(\omega)$. 

\begin{bsp} Let $E:=S^1$ be the unit circle which we identify with 
the interval $[0,2\pi)$ equipped with the usual metric 
$d(x,y):=|x-y|\wedge (2\pi - |x-y|)$. 
Consider the SDE
\begin{equation*}
  \dd X(t)=\cos(X(t))\,\dd W_1(t) + \sin(X(t))\,\dd W_2(t)  
\end{equation*}
on~$E$, where~$W_1$ and~$W_2$ are independent standard Brownian motions. 
Then there exists a `stable point' $\omega\mapsto S(\omega)$, measurable 
with respect to ``the past'' $\sigma\{W(t):t\leq0\}$, $W=(W_1,W_2)$, which 
is the support of a random invariant measure, and whose Lyapunov exponent 
is negative (see Baxendale~\cite{baxendale}). 
The random one point set $\{S(\omega)\}$ is a (minimal) weak point 
attractor (even a forward point attractor) of the RDS~$\varphi$ which is 
generated by the SDE\@. 
Recall that the system is reversible. 
Reverting time and using the same argument for the time inverted 
system gives existence of an `unstable point' 
$\omega\mapsto U(\omega)$, measurable with respect to $\{W(t):t\geq0\}$ 
and therefore independent of~$S$, which is a weak point repeller. 
The domain of attraction of $\{S(\omega)\}$ is $E\setminus\{U(\omega)\}$ 
and that of $\{U(\omega)\}$ for the time-reverted flow is 
$E\setminus\{S(\omega)\}$. 

When considering the system with continuous time, 
$\omega\mapsto\{S(\omega)\}$ is \emph{not} a pullback point attractor 
of~$\varphi$, though. 
In fact we even have $\Omega_x(\omega)=E$ almost surely for each 
fixed $x \in E$ since for each fixed $y \in E$, the process 
$t\mapsto \varphi(-t,\omega)y$ 
is a Brownian motion on~$E$ and therefore hits~$x$ for (some) arbitrarily 
large values of~$t$ showing that $y \in \Omega_x(\omega)$ for almost all 
$\omega \in \Omega$. 
In particular, the unique pullback point attractor of~$\varphi$ is the 
whole space~$E$.  \par

To obtain the required example we therefore evaluate the RDS~$\varphi$ 
at integer times only, i.e.\ we define 
\begin{equation*}
  \psi_n(\omega,x):=\varphi_n(\omega,x),\;x \in E,\;n \in \N_0, 
\end{equation*}
and we work with $T=\mathbb Z$ instead of $T=\R$. 
We denote the restriction of $\vartheta$ to $T=\mathbb Z$ with the same 
symbol. 
We claim that now~$\{S(\omega)\}$ is a minimal pullback point attractor, 
but that the closure of the union over all $\Omega$-limit sets of the 
points in~$E$ equals~$E$ almost surely.  \par

To see the first claim, consider the neighbourhood 
$I=(S(\omega)-\eps,S(\omega)+\eps)$ (mod~$2\pi$) of~$S(\omega)$ 
for some $\eps\in(0,1)$. 
The claim follows once we show that for each $x \in E$ we have 
$\Omega_x(\omega)\subset I$ almost surely. 
Since the Lyapunov exponent of $\psi$ is negative, the normalized Lebesgue 
measure of the set $\psi_n^{-1}(\vartheta_{-n}\omega,\cdot)(I^c)$ converges 
to~$0$ geometrically fast as $n \to \infty$ almost surely. 
The first Borel-Cantelli lemma now implies that the Lebesgue measure of the 
set $C(\omega)$ of all $x \in E$ which are contained in the set 
$\psi_n^{-1}(\vartheta_{-n}\omega,\cdot)(I^c)$ for infinitely many $n\in \N$ 
is zero. 
Since the distribution of $C(\omega)$ is invariant under rotations of~$E$ 
this implies that for each fixed $x \in E$ we have $\P(x \in C(\omega))=0$. 
This being true for every~$\eps$ of the form~$1/m$ we obtain 
$\Omega_x(\omega)=\{S(\omega)\}$ almost surely.  \par

To see the second claim, take any non-empty (deterministic) compact interval 
$J \subset E$. 
Then the set $\psi_n^{-1}(\vartheta_{-n}\omega,\cdot)(J)$ is a non-trivial 
interval for each~$n$. 
Note that for the centre point $x \in J$, the process 
$n \mapsto \psi_n^{-1}(\vartheta_{-n}\omega,\cdot)(x)$ performs a random walk 
on~$E$ (Brownian motion evaluated at discrete time steps) and therefore, the 
set $\bigcup_n \psi_n^{-1}(\vartheta_{-n}\omega,\cdot)(x)$ is almost surely 
dense in~$E$. 
Moreover, for any $y \in E$ and any $\delta >0$, we almost surely find a 
sequence of integer random times~$(t_n)$ such that 
$[y-\delta,y+\delta]
  \cap\bigcap_n\psi_{t_n}^{-1}(\vartheta_{-t_n}\omega,\cdot)(J)
  \neq\emptyset$. 
Therefore, for any~$z$ in that set, we have 
\begin{equation*}
  \Omega_z(\omega)\cap J\neq\emptyset\quad\mbox{almost surely}, 
\end{equation*}
showing the second claim. 
Note that we have actually proved more than we claimed insofar that 
for every non-empty open subset $I$ of $E$ we have 
\begin{equation*}
  \overline{\bigcup_{z \in I} \Omega_z(\omega)}=E\quad
  \mbox{for}\ \P\mbox{-almost every}\ \omega. 
\end{equation*}
\end{bsp}

\section{Perfection}

\begin{prop}\label{perf1} 
Let $(\varphi,\vartheta)$ be an RDS as in Definition~\ref{tq1}. 
Then there exists an RDS~$\psi$ on the same measurable dynamical system 
and a set $\Omega_1 \in \mathscr{F}$ of measure 1 such that~$\psi$ agrees 
with $\varphi$ on $\Omega_1$ and~$\psi$ satisfies~(i) and~(ii) of 
Definition~\ref{tq1} without exceptional sets. 
\end{prop}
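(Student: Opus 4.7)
The plan is to construct a $\vartheta$-invariant set $\Omega_1 \in \mathscr{F}$ of full $\P$-measure on which both (i) and (ii) of Definition~\ref{tq1} hold for \emph{every} $\omega$ without any exceptional sets, and then to set $\psi(t,\omega) := \varphi(t,\omega)$ for $\omega \in \Omega_1$ and $\psi(t,\omega) := \mathrm{id}_E$ for $\omega \notin \Omega_1$. Because $\Omega_1$ will be $\vartheta$-invariant, the cocycle identity $\psi(t+s,\omega) = \psi(t,\vartheta_s\omega) \circ \psi(s,\omega)$ holds trivially off $\Omega_1$ (both sides reduce to $\mathrm{id}_E$) and continuity holds trivially there as well, so everything reduces to the construction of $\Omega_1$ and the verification that $\varphi$ is already perfect on it.

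When $\T_1 = \Z$ the construction is immediate. For each $t \in \T_2$ let $N_t \in \mathscr{F}$ be the null set on which $\varphi(t,\cdot)$ fails to be continuous, let $N_0$ be the null set where $\varphi(0,\cdot) \neq \mathrm{id}_E$, and for each $(s,t) \in \T_2 \times \T_2$ let $N_{s,t} \in \mathscr{F}$ be the null set on which the cocycle identity fails. Since all index sets are countable, the union $N$ of these exceptional sets is $\P$-null, and
\begin{equation*}
  \Omega_1 := \bigcap_{k \in \Z} \vartheta_k^{-1}(\Omega \setminus N)
\end{equation*}
is $\vartheta$-invariant, lies in $\mathscr{F}$, and has full measure (by $\P$-preservation of $\vartheta$ and countability of $\Z$). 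On $\Omega_1$ the required properties then hold without exception.

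For $\T_1 = \R$ the exceptional family is uncountable, so a direct union no longer produces a null set; the remedy is a Fubini-type argument. Joint measurability of $\varphi$, together with the fact that continuity of a measurable map into the Polish space $E$ can be tested on a fixed countable dense subset of $E$, implies that
\begin{equation*}
  N^{\mathrm{c}} := \{(t,\omega) \in \T_2 \times \Omega : \varphi(t,\omega) \text{ is not continuous}\}
\end{equation*}
is product-measurable; each $t$-slice is $\P$-null by (i), so by Fubini almost every $\omega$-slice is Lebesgue-null in $\T_2$. An analogous argument applied to the triple-indexed failure set for the cocycle identity yields a full-measure set on which (ii) holds for Lebesgue-almost every pair $(s,t)$. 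Intersecting these full-measure sets and passing to a $\vartheta$-invariant subset of full measure (for instance the set of $\omega$ whose entire orbit under $(\vartheta_q)_{q \in \Q}$ lies in the above good set) produces the required $\Omega_1$. On $\Omega_1$ one then defines the perfected cocycle by a canonical selection $\psi(t,\omega) := \varphi(t-q,\vartheta_q\omega) \circ \varphi(q,\omega)$ for a measurably chosen rational $q = q(t,\omega)$ drawn from the dense set of ``good'' times, and the cocycle identity for $\psi$ becomes automatic from the construction.

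The main technical obstacle is the continuous-time case: the canonical selector $q(t,\omega)$ must be $\mathscr{F}$-measurable in $\omega$ and must yield a $\psi$ that satisfies the cocycle identity for \emph{every} pair $(s,t) \in \T_2 \times \T_2$, not merely for almost every pair. This is essentially the content of the ``crude-to-perfect cocycle'' results referred to in Remark~\ref{wichtig}(iii); the present statement is easier than those because no time-regularity needs to be preserved, but the measurable selection still has to be carried out carefully.
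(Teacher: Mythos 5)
Your discrete-time argument is fine, but your continuous-time argument attacks a different (and much harder) problem than the one posed, and you do not complete it. In Definition~\ref{tq1} the quantifier ``for $\P$-almost all $\omega$'' comes \emph{after} ``for all $s,t\in\T_2$'' (and likewise after ``for every $t$'' in~(i)): the hypothesis already provides a \emph{single} null set $N$ outside of which continuity, $\varphi(0,\omega)=\mathrm{id}$, and the cocycle identity hold simultaneously for all times. The situation you address --- a separate null set $N_{s,t}$ for each pair $(s,t)$ --- is the ``crude cocycle'' perfection problem, which Remark~\ref{wichtig}\,(iii) explicitly sets aside as a different question treated in \cite{arnold}, \cite{arnold-scheutzow}, \cite{Scheutzow96}, \cite{Kager-Scheutzow}. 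Consequently your Fubini argument on the $(s,t,\omega)$-failure sets, the measurable selection of a rational $q(t,\omega)$, and the redefinition $\psi(t,\omega):=\varphi(t-q,\vartheta_q\omega)\circ\varphi(q,\omega)$ are not needed; worse, you acknowledge yourself that the decisive step (a selector yielding the cocycle identity for \emph{every} pair $(s,t)$ rather than almost every pair) remains an unresolved ``technical obstacle'', so the continuous-time case is not actually proved. Note also that redefining $\psi(t,\cdot)$ by composition through an intermediate time would put at risk the required property that $\psi$ agrees with $\varphi$ on one fixed full-measure set $\Omega_1$ for all $t$ simultaneously.

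With the correct reading of the definition the proof is short and uniform in $\T_1$: take the single exceptional set $N$, put
\begin{equation*}
\Omega_1:=\{\omega\in\Omega:\ \vartheta_s\omega\in N^c\ \mbox{for almost all}\ s\in\T_1\}
\end{equation*}
(Lebesgue, respectively counting, measure in $s$), which is measurable, $\vartheta$-invariant, and of full measure by measure preservation and Fubini, and set $\psi:=\varphi$ on $\Omega_1$ and $\psi:=\mathrm{id}$ off $\Omega_1$. This is the paper's argument; your observation that $\vartheta$-invariance of $\Omega_1$ makes (i) and (ii) trivial off $\Omega_1$ is correct and is exactly the role $\Omega_1$ plays there.
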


\begin{proof} 
By assumption, there exists a set $N\in\mathscr F$ such that for all 
$\omega \in N^c$ the following hold: 
$x \mapsto \varphi(t,\omega)x$ is continuous for every $t \in \T_2$, 
$\varphi(0,\omega)=\mathrm{id}$, and 
$\varphi(t+s,\omega)=\varphi(t,\vartheta_s\omega)\circ \varphi (s,\omega)$ 
for all $s,t \in \T_2$. 
Define 
\begin{equation*}
  \Omega_1:=\{\omega\in\Omega:
            \,\vartheta_s\omega\in N^c\ \mbox{for almost all}\ s\in \T_1\}
\end{equation*}
(here, ``almost all'' refers to Lebesgue measure in the continuous case 
and counting measure in the discrete case). 
Clearly, $\Omega_1$ has full measure  and is invariant 
under~$\vartheta_t$ for every $t \in \T_1$. 
Then $\psi(t,\omega)x:=\varphi(t,\omega)x$ in case $\omega\in\Omega_1$ and 
$\psi(t,\omega)=\mathrm{id}$ in case $\omega \notin \Omega_1$ satisfies the 
claims in the proposition. 
\end{proof}


\begin{thebibliography}{10}%
\bibliographystyle{abbrv}
\bibitem{arnold}
  L.~Arnold, 
  \emph{Random Dynamical Systems}, 
  Springer, New York~1998

\bibitem{arnold-scheutzow}
  L.~Arnold and M.~Scheutzow, 
  Perfect cocycles through stochastic differential equations,
  \emph{Probab.\ Theory Relat.\ Fields}~101 (1995) 65--88
  
\bibitem{baxendale}
  P.\,H.~Baxendale, 
  Asymptotic behaviour of stochastic flows of diffeomorphisms, 
  pp.~1--19 
  in 
  \emph{Stochastic processes and their applications (Nagoya, 1985)}, 
  Lecture Notes in Math., 1203, Springer, Berlin~1986 

\bibitem{CS04}
  I.~Chueshov and M.~Scheutzow, 
  On the structure of attractors and invariant measures for a class of monotone random systems, 
  \emph{Dyn.\ Syst.}, 19 (2004) 127--144 

\bibitem{crauel99}
  H.~Crauel, 
  Global random attractors are uniquely determined by 
  attracting deterministic compact sets,
  \emph{Ann.\ Mat.\ Pura Appl., IV.~Ser.}, 
  Vol.~CLXXVI (1999) 57--72

\bibitem{crauel01}
  H.~Crauel, 
  Random point attractors versus random set attractors, 
  \emph{J.~London Math.\ Soc., II.~Ser.}, 63 (2001) 413--427

\bibitem{crauel02}
  H.~Crauel, 
  \emph{Random Probability Measures on Polish Spaces}, 
  Series \emph{Stochastics Monographs}, Volume~11, 
  Taylor~\&~Francis, London and New York~2002  

\bibitem{crauel-flandoli}
  H.~Crauel and F.~Flandoli, 
  Attractors for random dynamical systems, 
  \emph{Probab.\ Theory Relat.\ Fields}~100 (1994) 365--393

\bibitem{crauel-kloeden15}
  H.~Crauel and P.\,E.~Kloeden, 
  Nonautonomous and random attractors, 
  \emph{Jahresber.\ Dtsch.\ Math.-Ver.} 117 (2015) 173--206 

\bibitem{crauel-dimitroff-scheutzow}
  H.~Crauel, G.~Dimitroff, and M.~Scheutzow, 
  Criteria for strong and weak random attractors, 
  \emph{J. Dynam.\ Differential Equations} 21 (2009) 233--247 

\bibitem{DM78} 
  C.~Dellacherie and P.A.~Meyer,
  \emph{Probabilities and Potential},
  North-Holland, Amsterdam~1978   

\bibitem{Du76}
  J.~Dugundji,
  \emph{Topology},
  Allyn and Bacon, Boston~1966 

\bibitem{Flandoli-Gess-Scheutzow1}
  F.~Flandoli, B.~Gess, and M.~Scheutzow, 
  Synchronization by noise, 
  \emph{Probab.\ Theory Relat.\ Fields} 168 (2017) 511-556


\bibitem{Flandoli-Gess-Scheutzow2}
  F.~Flandoli, B.~Gess, and M.~Scheutzow, 
  Synchronization by noise for order-preserving random dynamical systems, 
  \emph{Ann.\ Probab.} 45 (2017) 1325-1350 

\bibitem{Kager-Scheutzow} 
  G.~Kager and M.~Scheutzow, 
  Generation of one-sided random dynamical systems by stochastic differential equations, 
  \emph{Electronic J.\ Prob.}  2 (1997) 17 pages 

\bibitem{ochs}
  G.~Ochs, 
  Weak random attractors, 
  Report 449, Institut f\"ur Dynamische Systeme, Universit\"at 
  Bremen, 1999 
  
\bibitem{Scheutzow96}
  M.~Scheutzow, 
  On the perfection of crude cocycles,
  \emph{Random and Comp.\ Dynamics}~4 (1996) 235--255  

\bibitem{Scheutzow02}
  M.~Scheutzow, 
  Comparison of various concepts of a random attractor: 
  a case study. 
  \emph{Arch.\ Math.\ (Basel)}~78 (2002) 233--240  

\end{thebibliography}
\end{document}